


\documentclass[final,5p,times,twocolumn]{elsarticle}


\usepackage{amssymb}
 \usepackage{amsthm}


\journal{}
\usepackage{xspace}
\usepackage{amsmath}
\usepackage{amsfonts}
\usepackage{amssymb}
\usepackage{bm}
\usepackage{subfigure}
\usepackage{xspace}
\usepackage{afterpage}
\usepackage{color}
\usepackage{soul}
\usepackage{cancel}
\usepackage{gensymb}
\usepackage{tikz}
\usetikzlibrary{matrix,backgrounds,shapes}
\usetikzlibrary{positioning}
\usetikzlibrary{fit}
\usetikzlibrary{plotmarks}
\usetikzlibrary{decorations.pathreplacing}
\usepackage{pgfplots}
\usetikzlibrary{shapes}
\usetikzlibrary{positioning,arrows}
\usetikzlibrary{fadings,shapes.arrows,shadows}  
\usetikzlibrary{arrows.meta}
\definecolor{darkorange}{rgb}{1.0, 0.55, 0.0}
\definecolor{do}{rgb}{1.0, 0.55, 0.0}
\definecolor{Royalblue}{rgb}{0.254,0.41,0.88}
\usepackage{xcolor}


\definecolor{darkorange}{rgb}{1.0, 0.55, 0.0}
\definecolor{royalblue}{rgb}{0.254,0.41,0.88}

\newcommand{\Tr}{\ensuremath{^{\mr{T}}}}

\newcommand{\mr}[1]{\ensuremath{\mathrm{#1}}}
\newcommand{\fnc}[1]{\ensuremath{\mathcal{#1}}}
\newcommand{\bfnc}[1]{\ensuremath{\bm{\mathcal{#1}}}}
\newcommand{\mat}[1]{\ensuremath{\mathsf{#1}}}
\newcommand{\etal}[0]{{\em et~al.\@}\xspace}
\newcommand{\eg}[0]{{e.g.\@}\xspace}
\newcommand{\ie}[0]{{i.e.\@}\xspace}
\newcommand{\etc}[0]{{etc.\@}\xspace}

\newcommand{\Eq}[0]{Eq.}
\newcommand{\Th}[0]{\ensuremath{^{\mathrm{th}}}}
\newtheorem{assume}{Assumption}
\newtheorem{thrm}{Theorem}
\DeclareMathOperator{\diag}{diag}

\newcommand{\pL}[0]{\ensuremath{p_{\mathrm{L}}}}
\newcommand{\pH}[0]{\ensuremath{p_{\mathrm{H}}}}
\newcommand{\qL}[0]{\ensuremath{\bm{q}_{\mathrm{L}}}}
\newcommand{\qH}[0]{\ensuremath{\bm{q}_{\mathrm{H}}}}

\newcommand{\rmL}[0]{\mathrm{L}}
\newcommand{\rmH}[0]{\mathrm{H}}

\newcommand{\xm}[1]{\ensuremath{x_{#1}}}
\newcommand{\xil}[1]{\ensuremath{\xi_{#1}}}
\newcommand{\alphal}[1]{\ensuremath{\alpha_{#1}}}
\newcommand{\betal}[1]{\ensuremath{\beta_{#1}}}
\newcommand{\Nl}[1]{\ensuremath{N_{#1}}}
\newcommand{\bxil}[1]{\ensuremath{\bm{\xi}_{#1}}}
\newcommand{\bxili}[2]{\ensuremath{\bm{\xi}_{#1}^{(#2)}}}


\newcommand{\Q}[0]{\ensuremath{\bm{\fnc{Q}}}}
\newcommand{\Jk}[0]{\ensuremath{\fnc{J}_{\kappa}}}
\newcommand{\Qk}[0]{\ensuremath{\bm{\fnc{Q}_{\kappa}}}}
\newcommand{\Jdxildxm}[2]{\ensuremath{\Jk\frac{\partial\xil{#1}}{\partial\xm{#2}}}}
\newcommand{\Fxm}[1]{\ensuremath{\bm{\fnc{F}}_{\xm{#1}}}}
\newcommand{\FxmI}[1]{\ensuremath{\bm{\fnc{F}}_{\xm{#1}}^{I}}}
\newcommand{\FxmV}[1]{\ensuremath{\bm{\fnc{F}}_{\xm{#1}}^{V}}}

\newcommand{\Um}[1]{\ensuremath{\fnc{U}_{#1}}}
\newcommand{\E}[0]{\ensuremath{\fnc{E}}}
\newcommand{\nxm}[1]{\ensuremath{n_{\xm{#1}}}}
\newcommand{\nxil}[1]{\ensuremath{n_{\xil{#1}}}}

\newcommand{\DxiloneD}[1]{\ensuremath{\mat{D}_{\xil{#1}}^{(1D)}}}
\newcommand{\PxiloneD}[1]{\ensuremath{\mat{P}_{\xil{#1}}^{(1D)}}}
\newcommand{\QxiloneD}[1]{\ensuremath{\mat{Q}_{\xil{#1}}^{(1D)}}}
\newcommand{\SxiloneD}[1]{\ensuremath{\mat{S}_{\xil{#1}}^{(1D)}}}
\newcommand{\ExiloneD}[1]{\ensuremath{\mat{E}_{\xil{#1}}^{(1D)}}}
\newcommand{\txilalpha}[1]{\ensuremath{\bm{t}_{\alphal}}}
\newcommand{\txilbeta}[1]{\ensuremath{\bm{t}_{\betal}}}

\newcommand{\PoneD}[0]{\mat{P}^{(1D)}}

\newcommand{\Imat}[1]{\ensuremath{\mat{I}_{#1}}}
\newcommand{\M}[0]{\ensuremath{\mat{P}}}

\newcommand{\Dxil}[1]{\ensuremath{\mat{D}_{\xil{#1}}}}

\newcommand{\Sxil}[1]{\ensuremath{\mat{S}_{\xil{#1}}}}

\newcommand{\Porthol}[1]{\ensuremath{\mat{P}_{\perp\xil{#1}}}}

\newcommand{\ones}[1]{\ensuremath{\bm{1}_{#1}}}
\newcommand{\barones}[1]{\ensuremath{\overline{\bm{1}}_{#1}}}
\newcommand{\wk}[1]{\ensuremath{\bm{w}_{#1}}}

\newcommand{\matJk}[1]{\ensuremath{{\color{orange}\mat{J}}_{#1}}}

\newcommand{\matAlmk}[3]{\ensuremath{\left[{\color{blue}\fnc{J}\frac{\partial\xil{#1}}{\partial\xm{#2}}}\right]_{#3}}}

\newcommand{\ILtoH}[0]{\ensuremath{\mat{I}_{\rmL\mr{to}\rmH}}}
\newcommand{\IHtoL}[0]{\ensuremath{\mat{I}_{\rmH\mr{to}\rmL}}}
\newcommand{\ILtoHoneD}[0]{\ensuremath{\mat{I}_{\rmL\mr{to}\rmH}^{(1D)}}}
\newcommand{\IHtoLoneD}[0]{\ensuremath{\mat{I}_{\rmH\mr{to}\rmL}^{(1D)}}}
\newcommand{\RL}[0]{\ensuremath{\mat{R}_{\rmL}}}
\newcommand{\RH}[0]{\ensuremath{\mat{R}_{\rmH}}}

\newcommand{\Ok}[0]{\ensuremath{\Omega_{\kappa}}}
\newcommand{\pOk}[0]{\ensuremath{\partial\Omega_{\kappa}}}
\newcommand{\Ohat}[0]{\ensuremath{\hat{\Omega}}}
\newcommand{\Ohatk}[0]{\ensuremath{\hat{\Omega}_{\kappa}}}

\newcommand{\Ghat}[0]{\ensuremath{\hat{\Gamma}}}
\newcommand{\pOhatk}[0]{\ensuremath{\partial\hat{\Omega}_{\kappa}}}


\newcommand{\uk}[0]{\ensuremath{\bm{u}_{\kappa}}}

\newcommand{\uL}[0]{\ensuremath{\bm{u}_{\rmL}}}
\newcommand{\uH}[0]{\ensuremath{\bm{u}_{\rmH}}}


\newcommand{\Cij}[2]{\ensuremath{\mat{C}_{#1,#2}}}
\newcommand{\Chatij}[2]{\ensuremath{\hat{\mat{C}}_{#1,#2}}}

\newcommand{\Jtilde}[0]{\ensuremath{{\color{orange}\mat{J}}}}
\newcommand{\utilde}[0]{\ensuremath{\widetilde{\bm{u}}}}
\newcommand{\qtilde}[0]{\ensuremath{\widetilde{\bm{q}}}}

\newcommand{\wtilde}[0]{\ensuremath{\widetilde{\bm{w}}}}
\newcommand{\stilde}[0]{\ensuremath{\widetilde{\bm{s}}}}

\newcommand{\Mtilde}[0]{\ensuremath{\widetilde{\mat{P}}}}
\newcommand{\DtildeH}[1]{\ensuremath{\widetilde{\mat{D}}_{\xil{#1}}}}
\newcommand{\Dtildel}[1]{\ensuremath{\widetilde{\mat{D}}_{\xil{#1}}}}
\newcommand{\Qtildel}[1]{\ensuremath{\widetilde{\mat{Q}}_{\xil{#1}}}}
\newcommand{\Stildel}[1]{\ensuremath{\widetilde{\mat{S}}_{\xil{#1}}}}
\newcommand{\Etildel}[1]{\ensuremath{\widetilde{\mat{E}}_{\xil{#1}}}}
\newcommand{\tildebarmatAlm}[2]{\ensuremath{\left[{\color{blue}\fnc{J}\frac{\partial\xil{#1}}{\partial \xm{#2}}}\right]}}
\newcommand{\tildeone}[0]{\ensuremath{\tilde{\bm{1}}}}


\newcommand{\eonel}[1]{\ensuremath{\bm{e}_{1_{#1}}}}
\newcommand{\eNl}[1]{\ensuremath{\bm{e}_{\Nl{#1}}}}
\newcommand{\bmxi}[1]{\ensuremath{\bm{\xi}^{(#1)}}}
\newcommand{\U}[0]{\ensuremath{\fnc{U}}}


\newcommand{\am}[1]{\ensuremath{a_{#1}}}
\newcommand{\Bm}[1]{\ensuremath{b_{#1}}}
\newcommand{\Chatla}[2]{\ensuremath{\hat{\fnc{C}}_{#1,#2}}}
\newcommand{\Thetaa}[1]{\ensuremath{\Theta_{#1}}}
\newcommand{\matChatla}[2]{\ensuremath{\left[{\color{blue}\Chatla{#1}{#2}}\right]}}
\newcommand{\thetatildea}[1]{\ensuremath{\tilde{\bm{\theta}}_{#1}}}
\newcommand{\thetaa}[1]{\ensuremath{\bm{\theta}_{#1}}}
\newcommand{\IP}[0]{\ensuremath{\bm{I}_{P}}}
\newcommand{\matJG}[1]{\ensuremath{{\color{orange}\mat{J}_{\Ghat^{#1}}}}}

\newtheorem{definition}{Definition}
\newtheorem{remark}{Remark}
\begin{document}

\begin{frontmatter}



\title{Entropy Stable \MakeLowercase{$p$}-Nonconforming Discretizations with the Summation-by-Parts Property for the Compressible Navier--Stokes Equations\tnoteref{t1}}

\tnotetext[t1]{D. Del Rey Fern\'andez was partially supported by a NSERC Postdoctoral Fellowship, 
and gratefully acknowledges this support. Associated with this paper is a NASA TM report that includes many of the proofs which were omitted for brevity.}
\author[nia,nasa]{David C.~Del Rey Fern\'andez\corref{1}\fnref{fn1}}
\ead{dcdelrey@gmail.com}
\author[nasa]{Mark H. Carpenter\fnref{fn2}}
\ead{mark.h.carpenter@nasa.gov}
\author[KAUST]{Lisandro Dalcin\fnref{fn3}}
\ead{dalcinl@gmail.com}
\author[Germany]{Lucas Fredrich\fnref{fn4}}
\ead{lfriedri@math.uni-koeln.de}
\author[Sweden]{Andrew R. Winters\fnref{fn5}}
\ead{andrew.ross.winters@liu.se}
\author[Germany]{Gregor J. Gassner\fnref{fn6}}
\ead{ggassner@math.uni-koeln.de}
\author[KAUST]{Matteo Parsani\fnref{fn7}}
\ead{matteo.parsani@kaust.edu.sa}

\cortext[cor1]{corresponding author}
\fntext[fn1]{Postdoctoral Fellow}
\fntext[fn2]{Senior Research Scientist}
\fntext[fn3]{Research Scientist}
\fntext[fn4]{Data Scientist}
\fntext[fn5]{Research Fellow}
\fntext[fn6]{Professor}
\fntext[fn7]{Assistant Professor}

\address[nia]{National Institute of Aerospace,
  Hampton, Virginia, United States}
\address[nasa]{Computational AeroSciences Branch, NASA Langley Research Center,
  Hampton, Virginia, United States}
\address[Germany]{Mathematical Institute, University of Cologne,
  North Rhine-Westphalia, Germany}
\address[Sweden]{Department of Mathematics (MAI), Link\"{o}ping University, Sweden}
\address[KAUST]{King Abdullah University of Science and Technology (KAUST), 
  Computer Electrical and Mathematical Science and Engineering Division (CEMSE), 
  Extreme Computing Research Center (ECRC), Thuwal, Saudi Arabia}






\begin{abstract}
The entropy conservative, curvilinear, nonconforming, 
$p$-refinement algorithm for hyperbolic conservation laws of Del Rey Fern\'andez~\etal (2019), is extended 
from the compressible Euler equations to the compressible Navier--Stokes equations.
A simple and flexible coupling procedure with planar interpolation operators between adjoining nonconforming elements is used. 
Curvilinear volume metric terms are numerically approximated via a minimization procedure and satisfy 
the discrete geometric conservation law conditions. Distinct curvilinear surface metrics are used on the 
adjoining interfaces to construct the interface coupling terms, thereby localizing the discrete geometric conservation law 
constraints to each individual element. The resulting scheme is entropy conservative/stable, element-wise 
conservative, and freestream preserving.
Viscous interface dissipation operators are developed that retain the entropy stability of the base scheme. The 
accuracy and stability properties of the resulting numerical scheme are shown to be comparable 
to those of the original conforming scheme (achieving $\sim p+1$ convergence)
in the context of 
the viscous shock problem, the Taylor--Green vortex problem at a Reynolds number of
$Re=1,600$, and a subsonic turbulent flow past a sphere at $Re = 2,000$.
\end{abstract}



\begin{keyword}
compressible Navier--Stokes equations \sep nonconforming interfaces \sep nonlinear entropy stability \sep summation-by-parts and simultaneous-approximation-terms \sep curved elements \sep unstructured grids




\end{keyword}

\end{frontmatter}


\section{Introduction}
For a certain class of partial differential equations (PDEs), such as the linear wave propagation, 
high-order accurate methods are known to be more efficient than low order methods~\cite{Kreiss1972,Swartz1974}. 
Moreover, high-order methods are well suited to exploit the exascale concurrency on next 
generation hardware because their computational kernels are arithmetically dense and typically local (see, for instance,
\cite{hutchinson_isc16,hadri_ccpe_2019}). 
Nevertheless, despite their long history of development, their application 
to nonlinear PDEs for practical application has been limited by robustness issues, particularly 
in the context of $h$-, $p$-, and $r$-refinement algorithms
to better resolve multi-scale physics. Thus, nominally second-order 
accurate discretization 
operators have dominated commercial software development.

In the context of linear and variable coefficient problems, the summation-by-parts (SBP) framework
provides a systematic and discretization-agnostic methodology for the design and analysis of 
arbitrarily high order, provably conservative and stable numerical methods (see the review papers~\cite{Fernandez2014,Svard2014}). 
SBP operators are matrix difference operators that come endowed with a high-order accurate 
approximation to integration by parts (IBP) that telescopes (\ie, results in boundary terms). 
Over each element, the mimetic and telescoping
properties allow a one-to-one match between discrete and continuous stability proofs. The stability of the full spatial discretization 
is achieved by combining the local SBP mechanics with suitable inter-element coupling and boundary conditions 
procedures (e.g., the simultaneous approximation terms (SATs)~\cite{Carpenter1994,Carpenter1999,Nordstrom1999,Nordstrom2001b,Carpenter2010,svard_entropy_stable_euler_wall_2014,Parsani2015,Parsani2015b,dalcin_2019_wall_bc}).

For nonlinear problems, a provable stability analysis and general theory at the discrete level has remained far more elusive. Nonetheless, progress has been 
made, in particular, Tadmor~\cite{Tadmor1987entropy} developed entropy conservative/stable low order finite volume schemes that achieve entropy 
conservation by using two-point flux functions that when contracted with the entropy variables 
result in a telescoping entropy flux. Entropy stability results by adding appropriate dissipation. 
Via the telescoping property, the continuous $L^{2}$ entropy stability analysis is 
mimicked by the semi-discrete stability analysis (for a complete exposition of these ideas, see 
Tadmor~\cite{Tadmor2003}). The essence of Tadmor's approach resulted in the construction 
of numerous entropy stable schemes. For example, Fjordholm~\etal~\cite{Fjordholm2012} 
have constructed high-order accurate essentially 
non-oscillatory schemes and Ray~\etal~\cite{Ray2016} have constructed low-order accurate unstructured finite 
volume discretizations.

Fisher and co-authors extended Tadmor's approach to finite domains by combining the SBP framework 
with Tadmor's two point flux functions~\cite{Fisher2012phd,Fisher2013,FisherCarpenter2013JCPb} to construct entropy 
stable semi-discrete schemes (see also the related work~\cite{Bjorn2009,Bjorn2018}). Since then, 
these ideas have been extended in numerous ways 
(\eg Refs.~\cite{Carpenter2014,Winters2015,Parsani2016,Carpenter2016,Winters2016,Gassner2016b,Winters2017,Chen2017,Derigs2017,Wintermeyer2017,Crean2018,Chan2018,Fernandez2019_staggered}). 
This combination is attractive because it inherits all of the mechanics of SBP schemes for the imposition of boundary 
conditions and inter-element coupling and therefore gives a systematic methodology for discretizing 
problems on complex geometries \cite{Carpenter2014,Parsani2015b}. Furthermore, the resulting discrete stability proofs do not rely on the assumption of exact integration 
(see for example the work of Hughes~\etal~\cite{Hughes1986}). Recently, these ideas have been extended to
achieve fully-discrete explicit entropy stable schemes for the compressible Euler \cite{Friedrich2019} 
and Navier--Stokes equations  \cite{ranocha2019relaxation}.

An alternative approach, developed by Olsson and Oliger~\cite{Olsson1994}, 
Gerritsen and Olsson~\cite{Margot1996} 
and Yee~\etal~\cite{Yee2000} (see also~\cite{Sandham2002,Bjorn2018}), is based upon choosing entropy 
functions that result in a homogeneity property on the compressible Euler fluxes. Using this property, 
the Euler fluxes can be split such that when contracted with the entropy variables stability estimates 
result that are analogous in form to energy estimates obtained for linear PDEs. 

The objective herein, is to construct entropy stable discretizations for the compressible Navier--Stokes
equations (NSE), applicable to high-order accurate $p$-adaptivity.  The work is
a natural extension of the entropy stable $p$-refinement algorithm for the compressible 
Euler equations in Refs.~\cite{Fernandez2019_p_euler,Friedrich2018}. 
The inviscid terms in the NSE are discretized without modifications using an existing 
approach~\cite{Carpenter2014,Carpenter2015,Parsani2015b,Parsani2016}.  The inviscid discretization
requires two sets of metrics: 
1) volume metrics determined numerically by solving a discrete Geometric Conservation Law (GCL) 
constraint, and 2) surface metric terms that are specified.
Viscous terms are discretized using 
a local discontinuous Galerkin (LDG) approach, plus interior penalty (IP) dissipation included 
on interfaces.  The discretization of the viscous terms is a natural extension of the curvilinear 
LDG-IP approaches found in~\cite{Carpenter2014,Carpenter2015,Parsani2015b,Parsani2016}, to include 
non-conforming interfaces, and are entropy stable by construction.

The contributions of the paper are summarized as follows:
\begin{itemize}
\item The LDG-IP approach in Refs.~\cite{Carpenter2014,Carpenter2015,Parsani2015b,Parsani2016} is applied and extended to the curvilinear nonconforming interface problem.
  \begin{itemize} 
    \item 
The viscous operator, written in terms of the entropy variables,
is discretized using an LDG approach with macro-element discretizations.  
A provably stable quadratic form results, 
provided that identical metrics are used in (both) curvilinear transformations. 
    \item Viscous interface dissipation (the IP terms) are a generalization of the inviscid nonconforming
interface dissipation.  A novel average of on-element and interpolated off-element data leads immediately
to an entropy stable IP term.
     \item The resulting scheme is entropy stable and freestream preserving. 
\end{itemize}
\item Numerical evidence is provided demonstrating that the nonconforming algorithm 1) retains 
similar non-linear robustness properties 
    and 2) achieves similar $L^{2}$-norm convergence rates, i.e., $\sim p+1$ (where $p$ is the highest 
degree polynomial exactly differentiated by the differentiation operator) 
as that of the original nonlinearly stable conforming algorithm \cite{Carpenter2014,Parsani2016}
\end{itemize}
The paper is organized as follows. Section~\ref{sec:notation} delineates the notation used herein. 
Section~\ref{sec:discretizationconvectiondiffusion} reviews the nonconforming algorithm for hyperbolic 
conservation laws of Del Rey Fern\'andez~\etal~\cite{Fernandez2019_p_euler,Fernandez2018_TM} in the context of the linear convection 
equation. The extension to viscous terms is demonstrated by considering the convection-diffusion equation. 
The application of the nonconforming algorithm to the viscous components 
of the compressible Navier--Stokes equations is detailed in Section~\ref{sec:NS} while numerical 
experiments are presented in Section~\ref{sec:numNS}. Finally, conclusions are drawn in Section~\ref{sec:conclusions}.
\section{Notation}\label{sec:notation}
The notation used herein is identical to that in~\cite{Fernandez2019_p_euler}; readers familiar with the notation 
can skip to Section~\ref{sec:discretizationconvectiondiffusion}. PDEs are discretized on cubes having Cartesian computational coordinates denoted by 
the triple $(\xil{1},\xil{2},\xil{3})$, where the physical coordinates are denoted by the triple 
$(\xm{1},\xm{2},\xm{3})$. Vectors are represented by lowercase bold font, for example $\bm{u}$, 
while matrices are represented using sans-serif font, for example, $\mat{B}$. Continuous 
functions on a space-time domain are denoted by capital letters in script font.  For example, 
\begin{equation*}
\fnc{U}\left(\xil{1},\xil{2},\xil{3},t\right)\in L^{2}\left(\left[\alphal{1},\betal{1}\right]\times
\left[\alphal{2},\betal{2}\right]\times\left[\alphal{3},\betal{3}\right]\times\left[0,T\right]\right)
\end{equation*}
represents a square integrable function, where $t$ is the temporal coordinate. The restriction of such 
functions onto a set of mesh nodes is denoted by lower case bold font. For example, the restriction of 
$\fnc{U}$ onto a grid of $\Nl{1}\times\Nl{2}\times\Nl{3}$ nodes is given by the vector
\begin{equation*}
\bm{u} = \left[\fnc{U}\left(\bxili{}{1},t\right),\dots,\fnc{U}\left(\bxili{}{N},t\right)\right]\Tr,
\end{equation*}
where, $N$ is the total number of nodes ($N\equiv\Nl{1}\Nl{2}\Nl{3}$) square brackets ($[]$) are used 
to delineate vectors and matrices as well as ranges for variables (the context will make clear which meaning is being used). Moreover, $\bm{\xi}$ is a vector of vectors 
constructed from the three vectors $\bxil{1}$, $\bxil{2}$, and $\bxil{3}$, which are 
vectors of size $\Nl{1}$, $\Nl{2}$, and $\Nl{3}$ and contain the coordinates of the mesh in 
the three computational directions, respectively. Finally, $\bxil{}$ is constructed as 
\begin{equation*}
\bxil{}(3(i-1)+1:3i)\equiv  \bxili{}{i}
\equiv\left[\bxil{1}(i),\bxil{2}(i),\bxil{3}(i)\right]\Tr,
\end{equation*}
where the notation $\bm{u}(i)$ means the $i\Th$ entry of the vector $\bm{u}$ and $\bm{u}(i:j)$ is the subvector 
constructed from $\bm{u}$ using the $i\Th$ through $j\Th$ entries (\ie, Matlab notation is used).

 Oftentimes, monomials are discussed and the following notation is used:
\begin{equation*}
\bxil{l}^{j} \equiv \left[\left(\bxil{l}(1)\right)^{j},\dots,\left(\bxil{l}(\Nl{l})\right)^{j}\right]\Tr,
\end{equation*}
and the 
convention that $\bxil{l}^{j}=\bm{0}$ for $j<0$ is used.

Herein, one-dimensional SBP operators are used to discretize derivatives. 
The definition of a one-dimensional SBP operator in the $\xil{l}$ direction, $l=1,2,3$, 
is~\cite{DCDRF2014,Fernandez2014,Svard2014}
\begin{definition}\label{SBP}
\textbf{Summation-by-parts operator for the first derivative}: A matrix operator, 
$\DxiloneD{l}\in\mathbb{R}^{\Nl{l}\times\Nl{l}}$, is an SBP operator of degree $p$ approximating the derivative 
$\frac{\partial}{\partial \xil{l}}$ on the domain $\xil{l}\in\left[\alphal{l},\betal{l}\right]$ with nodal 
distribution $\bxil{l}$ having $\Nl{l}$ nodes, if 
\begin{enumerate}
\item $\DxiloneD{l}\bxil{l}^{j}=j\bxil{l}^{j-1}$, $j=0,1,\dots,p$;
\item $\DxiloneD{l}\equiv\left(\PxiloneD{l}\right)^{-1}\QxiloneD{l}$, where the norm matrix, 
$\PxiloneD{l}$, is symmetric positive definite;
\item $\QxiloneD{l}\equiv\left(\SxiloneD{l}+\frac{1}{2}\ExiloneD{l}\right)$, 
$\SxiloneD{l}=-\left(\SxiloneD{l}\right)\Tr$, $\ExiloneD{l}=\left(\ExiloneD{l}\right)\Tr$,  
$\ExiloneD{l} = \diag\left(-1,0,\dots,0,1\right)=\eNl{l}\eNl{l}\Tr-\eonel{l}\eonel{l}\Tr$, 
$\eonel{l}\equiv\left[1,0,\dots,0\right]\Tr$, and $\eNl{l}\equiv\left[0,0,\dots,1\right]\Tr$. 
\end{enumerate}
Thus, a degree $p$ SBP operator is 
one that differentiates exactly monomials up to degree $p$.
\end{definition}

In this work, one-dimensional SBP operators are extended to multiple dimensions 
using tensor products ($\otimes$).  The tensor product between the matrices $\mat{A}$ and $\mat{B}$ 
is given as $\mat{A}\otimes\mat{B}$. When referencing individual entries in a matrix the notation $\mat{A}(i,j)$ 
is used, which means 
the $i\Th$ $j\Th$ entry in the matrix $\mat{A}$.

The focus in this paper is exclusively on diagonal-norm SBP operators. Moreover, the same 
one-dimensional SBP operator are used in each direction, each operating on $N$ nodes. 
Specifically, diagonal-norm SBP operators constructed on the Legendre--Gauss--Lobatto (LGL) 
nodes are used, \ie, a discontinuous Galerkin collocated spectral element approach is utilized.

The physical domain $\Omega\subset\mathbb{R}^{3}$, 
with boundary $\Gamma\equiv\partial\Omega$ is partitioned into $K$ non-overlapping 
hexahedral elements. The domain of the $\kappa^{\text{th}}$ element is denoted by $\Ok$ and has 
boundary $\pOk$. Numerically, PDEs are solved in computational 
coordinates, 
where each $\Ok$ is locally transformed to $\Ohatk$, with boundary $\Ghat\equiv\pOhatk$, under the 
following assumption:

\begin{assume}\label{assume:curv}
Each element in physical space is transformed using 
a local and invertible curvilinear coordinate transformation that is compatible at 
shared interfaces, meaning that points in computational space on either side of a 
shared interface  mapped to the same physical location and therefore map back 
to the analogous location in computational space; this is the standard assumption 
that the curvilinear coordinate transformation is water tight.
\end{assume}
\section{A $p$-nonconforming algorithm: Linear convection-diffusion equation}\label{sec:discretizationconvectiondiffusion}
The focus in this paper is on curvilinearly mapped elements with conforming interfaces 
but nonconforming nodal distributions, as occurs, for example, in $p$-refinement. 
The construction of entropy conservative/stable 
discretizations for the compressible Euler equations on Cartesian grids is detailed in Friedrich~\etal~\cite{Friedrich2018}.
The extension to curvilinear coordinates is covered in~\cite{Fernandez2019_p_euler,Fernandez2018_TM} where a
$p$-refinement, curvilinear, interface coupling technique
that maintains 1) accuracy, 2) discrete entropy conservation/stability, and 3) element-wise conservation is presented. 
Herein, the technology presented in~\cite{Fernandez2019_p_euler,Fernandez2018_TM} is extended to the discretization of the viscous 
portion of the compressible Navier--Stokes equations.

\subsection{Scalar convection-diffusion equation: Continuous and semi-discrete analysis}\label{sec:ICASE_Eqn}
Many of the technical challenges in constructing conservative and stable nonconforming discretizations 
for the compressible Navier--Stokes equations are also present in the discretization of the linear convection-diffusion equation. 
For this reason, the interface coupling procedure for the inviscid terms shown in~\cite{Fernandez2019_p_euler,Fernandez2018_TM} as well as their extension to viscous terms 
(the focus of this paper) is presented in the context of this simple linear scalar equation. The linear convection-diffusion equation 
in Cartesian coordinates is given as  
\begin{equation}\label{eq:cartconvectiondiffusion}
  \frac{\partial\U}{\partial t}+\sum\limits_{m=1}^{3}\frac{\partial\left(\am{m}\U\right)}{\partial\xm{m}}=
  \sum\limits_{m=1}^{3}\frac{\partial^{2}(\Bm{m}\U)}{\partial\xm{m}^{2}},
\end{equation}
where $(\am{m}\U)$ are the inviscid fluxes, $\am{m}$ are the (constant) components of the convection speed, 
$\frac{\partial(\Bm{m}\U)}{\partial\xm{m}}$ are the viscous fluxes, and $\Bm{m}$ are the (constant and positive) diffusion coefficients. 
The energy method can be used to determine the stability of~\eqref{eq:cartconvectiondiffusion}. It proceeds by 
multiplying~\eqref{eq:cartconvectiondiffusion} by the solution, ($\U$), 
and after using the chain rule yields
{\small
\begin{equation}\label{eq:cartconvectionenergydiffusion1}
  \frac{1}{2}\frac{\partial\U^{2}}{\partial t}+
  \frac{1}{2}\sum\limits_{m=1}^{3}\frac{\partial\left(a_{m}\U^{2}\right)}{\partial\xm{m}}=
  \sum\limits_{m=1}^{3}\left\{\frac{\partial}{\partial\xm{m}}\left(\U\frac{\partial(\Bm{m}\U)}{\partial\xm{m}}\right)
  -\left(\frac{\partial(\Bm{m}\U)}{\partial\xm{m}}\right)^{2}\right\}.
\end{equation}
}
Integrating over the domain, $\Omega$, using the integration by parts, and 
the Leibniz rule yields
{\small
\begin{equation}\label{eq:cartconvectionenergydiffusion2}
  \begin{split}
  &\frac{\mr{d}}{\mr{d}t}\int_{\Omega}\frac{\U^{2}}{2}\mr{d}\Omega+\\
  &\frac{1}{2}\sum\limits_{m=1}^{3}\left(\oint_{\Gamma}\left\{
    \left(\am{m}\U^{2}\right)
    -2\U\frac{(\Bm{m}\U)}{\partial\xm{m}}
  \right\}\nxm{m}
    \mr{d}\Gamma+2\int_{\Omega}\left(\frac{\partial(\Bm{m}\U)}{\partial\xm{m}}\right)^{2}\mr{d}\Omega\right)=0,
  \end{split}
\end{equation}
}
with $\nxm{m}$ the $m\Th$ component of the outward facing unit normal. \Eq~\eqref{eq:cartconvectionenergydiffusion2} 
demonstrates that the time rate of change of the norm of the solution, 
$\|\U\|^{2}\equiv\int_{\Omega}\U^{2}\mr{d}\Gamma$, depends on surface flux integrals and a viscous dissipation term. 
Therefore, if appropriate boundary conditions are imposed, \Eq~\eqref{eq:cartconvectionenergydiffusion2} leads to an energy estimate 
on the solution and, hence, a proof of stability. 
The SBP framework used in this paper mimics the above energy stability analysis 
in a one-to-one fashion and leads to similar stability statements on the semi-discrete equations.  

Derivatives are approximated using differentiation matrices that 
are defined in computational space and for this purpose, \Eq~\eqref{eq:cartconvectiondiffusion} is transformed using the 
curvilinear coordinate transformation $\xm{m}=\xm{m}\left(\xil{1},\xil{2},\xil{3}\right)$. Thus, 
after expanding the derivatives with the chain rule as 
\begin{equation*}
  \frac{\partial}{\partial\xm{m}}=
  \sum\limits_{l=1}^{3}\frac{\partial\xil{l}}{\partial\xm{m}}\frac{\partial}{\partial\xil{l}},\quad
  \frac{\partial^{2}}{\partial\xm{m}^{2}}=
  \sum\limits_{l,a=1}^{3}\frac{\partial\xil{l}}{\partial\xm{m}}
  \frac{\partial}{\partial\xil{l}}\left(
  \frac{\partial\xil{a}}{\partial \xm{m}}\frac{\partial}{\partial\xil{a}}  
  \right),
\end{equation*} 
and multiplying by the metric Jacobian, ($\Jk$), \eqref{eq:cartconvectiondiffusion} becomes 
{\small
 \begin{equation}\label{eq:convectiondiffusionchain}
  \Jk\frac{\partial\U}{\partial t}
 +\sum\limits_{l,m=1}^{3}\Jk\frac{\partial\xil{l}}{\partial\xm{m}}
  \frac{\partial \left(a_{m}\U\right)}{\partial\xil{l}}=\sum\limits_{l,a,m=1}^{3}
  \Jk\frac{\partial\xil{l}}{\partial\xm{m}}\frac{\partial}{\partial\xil{l}}
  \left(\frac{\partial\xil{a}}{\partial \xm{m}}
  \frac{\partial(\Bm{m}\U)}{\partial\xil{a}}
  \right).
\end{equation}
}
Herein, Eq.~\eqref{eq:convectiondiffusionchain} is referenced as the chain rule form of Eq.~\eqref{eq:cartconvectiondiffusion}. 
Bringing the metric terms, $\Jdxildxm{l}{m}$, inside the derivative and using again the chain rule gives
{\small
 \begin{equation}\label{eq:convectiondiffusionstrong1}
  \begin{split}
  \Jk\frac{\partial\U}{\partial t}+\sum\limits_{l,m=1}^{3}
  \frac{\partial}{\partial\xil{l}}\left(\Jdxildxm{l}{m}a_{m}\U\right)
-&\sum\limits_{l,m=1}^{3}a_{m}\U\frac{\partial}{\partial\xil{l}}\left(\Jdxildxm{l}{m}\right)
 =\\
 \sum\limits_{l,a,m=1}^{3}
  \frac{\partial}{\partial\xil{l}}
  \left(\Jk\frac{\partial\xil{l}}{\partial\xm{m}}\frac{\partial\xil{a}}{\partial \xm{m}}
  \frac{\partial(\Bm{m}\U)}{\partial\xil{a}}\right)
-&\sum\limits_{l,a,m=1}^{3}
\frac{\partial\xil{a}}{\partial \xm{m}}
  \frac{\partial(\Bm{m}\U)}{\partial\xil{a}}
  \frac{\partial}{\partial\xil{l}}\left( \Jk\frac{\partial\xil{l}}{\partial\xm{m}}\right).
  \end{split}
\end{equation}
}
The last terms on the left- and right-hand sides of~\eqref{eq:convectiondiffusionstrong1} is zero via the GCL relations
\begin{equation}\label{eq:GCL}
    \sum\limits_{l=1}^{3}\frac{\partial}{\partial\xil{l}}\left(\Jdxildxm{l}{m}\right)=0,\quad m=1,2,3,
\end{equation}
leading to the strong conservation form of the convection-diffusion equation in curvilinear coordinates:
{\small
 \begin{equation}\label{eq:convectiondiffusionstrong}
  \Jk\frac{\partial\U}{\partial t}+\sum\limits_{l,m=1}^{3}
  \frac{\partial}{\partial\xil{l}}\left(\Jdxildxm{l}{m}a_{m}\U\right)= \sum\limits_{l,a,m=1}^{3}
  \frac{\partial}{\partial\xil{l}}
  \left(\Jk\frac{\partial\xil{l}}{\partial\xm{m}}\frac{\partial\xil{a}}{\partial \xm{m}}
  \frac{\partial(\Bm{m}\U)}{\partial\xil{a}}\right).
\end{equation}
}

Now, consider discretizing Eq.~\eqref{eq:convectiondiffusionstrong} by using 
the following differentiation matrices:
\begin{equation*}
\Dxil{1}\equiv\mat{D}^{(1D)}\otimes\Imat{N}\otimes\Imat{N},\;
\Dxil{2}\equiv\Imat{N}\otimes\mat{D}^{(1D)}\otimes\Imat{N},\;
\Dxil{3}\equiv\Imat{N}\otimes\Imat{N}\otimes\mat{D}^{(1D)},
\end{equation*} 
where $\Imat{N}$ is an $N\times N$ identity matrix. The diagonal 
matrices containing the metric Jacobian and metric terms along their diagonals, respectively, are defined as follows:
\begin{equation*}
  \begin{split}
  &\matJk{\kappa}\equiv\diag\left(\Jk(\bmxi{1}),\dots,\Jk(\bmxi{\Nl{\kappa}})\right),\\
  &\matAlmk{l}{m}{\kappa}\equiv\diag\left(\Jdxildxm{l}{m}(\bmxi{1}),\dots,
  \Jdxildxm{l}{m}(\bmxi{\Nl{\kappa}})\right),
  \end{split}
\end{equation*}
where $\Nl{\kappa}\equiv N^{3}$ is the total number of nodes in element $\kappa$.
Using this nomenclature, the discretization of~\eqref{eq:convectiondiffusionstrong} on the $\kappa\Th$ element reads
{\small
 \begin{equation}\label{eq:convectionstrongdisc}
  \begin{split}
  &\matJk{\kappa}\frac{\mr{d}\uk}{\mr{d}t}+\sum\limits_{l,m=1}^{3}\Dxil{l}\matAlmk{l}{m}{\kappa}\uk=\\
  &\sum\limits_{l,m,a=1}^{3}\Bm{m}\Dxil{l}^{\kappa}\matJk{\kappa}^{-1}\matAlmk{l}{m}{\kappa}\matAlmk{a}{m}{\kappa}\Dxil{a}^{\kappa}\uk
  +\bm{SAT},
  \end{split}
\end{equation}
}
where $\bm{SAT}$ is the vector of the SATs used to impose both boundary conditions and/or inter-element connectivity. 
Unfortunately, the scheme~\eqref{eq:convectionstrongdisc} is not guaranteed to be stable. However, a well-known remedy is to canonically split the inviscid terms into
one half of the inviscid terms in~\eqref{eq:convectiondiffusionchain} 
and one half of the inviscid terms in~\eqref{eq:convectiondiffusionstrong1} 
(see, for instance, \cite{Carpenter2015}), while the viscous terms are treated 
in strong conservation form.  This process leads to
{\small
\begin{equation}\label{eq:convectiondiffusionsplit}
  \begin{split}
  &\Jk\frac{\partial\U}{\partial t}+\frac{1}{2}\sum\limits_{l,m=1}^{3}\left\{
    \frac{\partial}{\partial\xil{l}}\left(\Jdxildxm{l}{m}a_{m}\U\right)+
     \Jdxildxm{l}{m}\frac{\partial}{\partial\xil{l}}\left(a_{m}\U\right)
    \right\}\\&-\frac{1}{2}\sum\limits_{l,m=1}^{3}\left\{
    a_{m}\U\frac{\partial}{\partial\xil{l}}\left(\Jdxildxm{l}{m}\right)\right\}=
\sum\limits_{l,a,m=1}^{3}
  \frac{\partial}{\partial\xil{l}}
  \left(\Jk\frac{\partial\xil{l}}{\partial\xm{m}}\frac{\partial\xil{a}}{\partial \xm{m}}
  \frac{\partial(\Bm{m}\U)}{\partial\xil{a}}\right),
  \end{split}
\end{equation}
}
where the last set of terms on the left-hand side are zero by the GCL conditions~\eqref{eq:GCL}. Then, a 
stable semi-discrete form is constructed in the same manner as the split form~\eqref{eq:convectiondiffusionsplit} by 
discretizing the inviscid portion of~\eqref{eq:convectiondiffusionchain} and~\eqref{eq:convectiondiffusionstrong} using $\Dxil{l}$, $\matJk{\kappa}$, and  
$\matAlmk{l}{m}{\kappa}$, and averaging the results, while the viscous terms are the discretization of the viscous portion of~\eqref{eq:convectiondiffusionstrong}. 
This procedure yields
\begin{equation}\label{eq:convectionsplitdisc}
  \begin{split}
  &\matJk{\kappa}\frac{\mr{d}\uk}{\mr{d} t}+\frac{1}{2}\sum\limits_{l,m=1}^{3}
  a_{m}\left\{\Dxil{l}\matAlmk{l}{m}{\kappa}+\matAlmk{l}{m}{\kappa}\Dxil{l}\right\}\uk
  \\&-\frac{1}{2}\sum\limits_{l,m=1}^{3}\left\{
    a_{m}\diag\left(\uk\right)\Dxil{l}\matAlmk{l}{m}{\kappa}\ones{\kappa}\right\}=\\
    &\sum\limits_{l,m,a=1}^{3}\Bm{m}\Dxil{l}^{\kappa}\matJk{\kappa}^{-1}\matAlmk{l}{m}{\kappa}\matAlmk{a}{m}{\kappa}\Dxil{a}^{\kappa}\uk,
  \end{split}
\end{equation}
where $\ones{\kappa}$ is a vector of ones of the size of the number of nodes on 
the $\kappa\Th$ element (the SATs have been ignored as they are not important for the current analysis). 
As in the continuous case, the semi-discrete form has a set of discrete GCL conditions
\begin{equation}\label{eq:discGCLconvection}
\sum\limits_{l=1}^{3}
    \Dxil{l}\matAlmk{l}{m}{\kappa}\ones{\kappa}=\bm{0}, \quad m = 1,2,3,
\end{equation}
that if satisfied, lead to the following telescoping (provably stable) semi-discrete form
\begin{equation}\label{eq:convectionsplitdisctele}
  \begin{split}
  &\matJk{\kappa}\frac{\mr{d}\uk}{\mr{d} t}+\frac{1}{2}\sum\limits_{l,m=1}^{3}
  a_{m}\left\{\Dxil{l}\matAlmk{l}{m}{\kappa}+\matAlmk{l}{m}{\kappa}\Dxil{l}\right\}\uk=\\
    &\sum\limits_{l,m,a=1}^{3}\Bm{m}\Dxil{l}^{\kappa}\matJk{\kappa}^{-1}\matAlmk{l}{m}{\kappa}\matAlmk{a}{m}{\kappa}\Dxil{a}^{\kappa}\uk.
  \end{split}
\end{equation}
\begin{remark}
  The linear stability of semi-discrete operators for constant coefficient hyperbolic systems, 
  is not preserved by arbitrary design order approximations to the metric terms.  Only approximations to the metric terms that satisfy
   the discrete GCL conditions~\eqref{eq:discGCLconvection} lead to stable semi-discrete forms. In Del Rey Fern\'andez~\etal~\cite{Fernandez2019_p_euler} 
   full details on how to approximate the metric terms for inviscid terms for nonconforming meshes are given; in the remainder of the paper it is assumed 
   that the metric terms used to discretize inviscid terms satisfy~\eqref{eq:discGCLconvection}. 
\end{remark}

\subsection{Scalar convection-diffusion equation and the nonconforming interface}\label{sec:ICASE_Eqn_NonConform}
The nonconforming semi-discrete algorithms are presented in a simplified setting by considering a single interface 
between two adjoining elements as shown in Figure~\ref{fig:non}. The elements share a vertical interface 
and without loss of generality are assumed to have aligned coordinates. The nonconformity is assumed to arise 
from local approximations with differing polynomial degrees (the analysis is equally valid for other contexts 
such as finite difference blocks with conforming faces but differing numbers of nodes). Specifically, the left element has 
polynomial degree $\pL$ (low: subscript/superscript $\rm{L}$) and the right element has polynomial degree $\pH$ 
(high: subscript/superscript $\rm{H}$) where $\pH>\pL$ (see Figure~\ref{fig:non}). 
\begin{figure}[htbp!]
\centering
\begin{tikzpicture}[scale =0.8]
\begin{axis}
[
axis line style={draw=none},
tick style={draw=none},
ticks=none,
ymin=0.25,ymax=0.8,xmin=0.25,xmax=1.0
]
\addplot[mark=none,line width = 0.5mm, color = darkorange, smooth] table {X11.txt};
\addplot[mark=none,line width = 0.5mm, color = darkorange, draw opacity = 0.5, smooth] table {X12.txt};
\addplot[mark=none,line width = 0.5mm, color = darkorange, draw opacity = 0.5, smooth] table {X13.txt};
\addplot[mark=none,line width = 0.5mm, color = darkorange, smooth] table {X14.txt};

\addplot[mark=none,line width = 0.5mm, color = darkorange, smooth] table {Y11.txt};
\addplot[mark=none,line width = 0.5mm, color = darkorange, draw opacity = 0.5, smooth] table {Y12.txt};
\addplot[mark=none,line width = 0.5mm, color = darkorange, draw opacity = 0.5, smooth] table {Y13.txt};
\addplot[mark=none,line width = 0.5mm, color = darkorange, smooth] table {Y14.txt};

\addplot[mark=*,color = darkorange,smooth, mark options={solid}, only marks,mark size=3pt] table {Y14.txt};

\addplot[mark=none,line width = 0.5mm, color = royalblue, smooth] table {X21.txt};
\addplot[mark=none,line width = 0.5mm, color = royalblue, draw opacity = 0.5, smooth] table {X22.txt};
\addplot[mark=none,line width = 0.5mm, color = royalblue, draw opacity = 0.5, smooth] table {X23.txt};
\addplot[mark=none,line width = 0.5mm, color = royalblue, draw opacity = 0.5, smooth] table {X24.txt};
\addplot[mark=none,line width = 0.5mm, color = royalblue, smooth] table {X25.txt};

\addplot[mark=none,line width = 0.5mm, color = royalblue, smooth] table {Y21.txt};
\addplot[mark=none,line width = 0.5mm, color = royalblue, draw opacity = 0.5, smooth] table {Y22.txt};
\addplot[mark=none,line width = 0.5mm, color = royalblue, draw opacity = 0.5, smooth] table {Y23.txt};
\addplot[mark=none,line width = 0.5mm, color = royalblue, draw opacity = 0.5, smooth] table {Y24.txt};
\addplot[mark=none,line width = 0.5mm, color = royalblue, smooth] table {Y25.txt};

\addplot[mark=square*,color = royalblue, mark options={solid}, smooth, only marks] table {Y21.txt};

\node (elemL) at (250,500) [text width=3cm]{\small Low element: $\pL$, $\uL$, $\Dxil{l}^{\rmL}$, +side};
\node (elemH) at (640,500) [text width=3cm]{\small High element: $\pH$, $\uH$, $\Dxil{l}^{\rmH}$, -side};

          \draw[-latex, line width = 0.5mm,] (10,50,0) -- (10,200,0);
          \draw[-latex, line width = 0.5mm,] (10,50,0) -- (160,50,0);
          \draw[-latex, line width = 0.5mm,] (10,50.5,0) -- (85,125,-2);
          \draw (180,45,0) node[anchor=west] {\small$\xil{1}$};
          \draw (15,215,0) node[anchor=south] {\small$\xil{3}$};
          \draw (100,140,-2) node[anchor=south east] {\small$\xil{2}$};

\end{axis}
\end{tikzpicture}
\caption{Two nonconforming elements.}
\label{fig:non}
\end{figure}
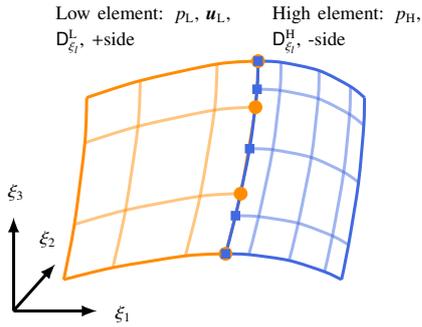

\subsubsection{Review of the inviscid coupling procedure}
First, the energy stable discretization appropriate for hyperbolic conservation laws presented in~\cite{Fernandez2019_p_euler,Fernandez2018_TM} is reviewed. Thus, 
we consider the discretization of the pure convection equation written as 
\begin{equation}\label{eq:convectionsplit}
  \begin{split}
  &\Jk\frac{\partial\U}{\partial t}+\frac{1}{2}\sum\limits_{l,m=1}^{3}\left\{
    \frac{\partial}{\partial\xil{l}}\left(\Jdxildxm{l}{m}a_{m}\U\right)+
     \Jdxildxm{l}{m}\frac{\partial}{\partial\xil{l}}\left(a_{m}\U\right)
    \right\}\\&-\frac{1}{2}\sum\limits_{l,m=1}^{3}\left\{
    a_{m}\U\frac{\partial}{\partial\xil{l}}\left(\Jdxildxm{l}{m}\right)\right\}=0.
  \end{split}
\end{equation}
The basic idea is to construct a macro SBP operator that spans both elements. A naive construction is the following operators 
assembled for the three coordinate directions:
{\small
\begin{equation}\label{eq:naive}
    \DtildeH{1}\equiv\left[
  \begin{array}{cc}
    \Dxil{1}^{\rmL}&\\
    &\Dxil{1}^{\rmH}
  \end{array}  
  \right], \quad
    \Dtildel{2}\equiv\left[
   \begin{array}{cc}
     \Dxil{2}^{\rmL}\\
     &\Dxil{2}^{\rmH} 
   \end{array}
   \right], \quad
    \Dtildel{3}\equiv\left[
   \begin{array}{cc}
     \Dxil{3}^{\rmL}\\
     &\Dxil{3}^{\rmH} 
   \end{array}
   \right].
\end{equation}
}
While the $\Dtildel{2}$ and $\Dtildel{3}$ macro element operators are by construction SBP operators, the $\DtildeH{1}$ is not by construction an 
SBP operator, despite the individual matrices composing $\DtildeH{1}$ being SBP operators.
Moreover, the $\Dtildel{1}$ operator provides no coupling between the two elements. 
To remedy both problems, special interface coupling must be introduced between the two elements.
For this purpose, interpolation operators are needed 
that interpolate information from the $\rmH$ element to the $\rmL$ element and vice versa.  For simplicity, 
the interpolation operators use only tensor product surface information from the adjoining interface surface.

With this background, the general matrix difference operators between the two elements are constructed as 
\begin{equation}
  \Dtildel{l}=\Mtilde^{-1}\Qtildel{l}=\Mtilde^{-1}\left(\Stildel{l}+\frac{1}{2}\Etildel{l}\right).
\end{equation}
The $\Dtildel{l}$, $l=1,2$ matrices satisfy the above decomposition and therefore are SBP. What is necessary is 
to modify the $\Dtildel{1}$ operator. Consider the following modifications:
\begin{equation}\label{eq:marcoD} 
  \begin{split}
  &\Mtilde\equiv\diag\left[
  \begin{array}{cc}  
  \M^{\rmL}\\
  &\M^{\rmH}
  \end{array}
  \right],\\
  &\Stildel{1}\equiv\\
  &
  \left[
  \begin{array}{cc}
    \Sxil{1}^{\rmL}&\tilde{\mat{S}}_{12}\\
   \tilde{\mat{S}}_{21}&\Sxil{1}^{\rmH}
  \end{array}
  \right],\\
  &\tilde{\mat{S}}_{12}\equiv\frac{1}{2}\left(\eNl{\rmL}\eonel{\rmH}\Tr\otimes\PoneD_{\rmL}\IHtoLoneD\otimes\PoneD_{\rmL}\IHtoLoneD\right),\\
  &\tilde{\mat{S}}_{21}\equiv -\frac{1}{2}\left(\eonel{\rmH}\eNl{\rmL}\Tr\otimes\PoneD_{\rmH}\ILtoHoneD\otimes\PoneD_{\rmH}\ILtoHoneD\right),\\
  &\Etildel{1}\equiv\\
  &
  \left[
  \begin{array}{cc}
    -\eonel{\rmL}\eonel{\rmL}\Tr\otimes\PoneD_{\rmL}\otimes\PoneD_{\rmL}\\
    &\eNl{\rmH}\eNl{\rmH}\Tr\otimes\PoneD_{\rmH}\otimes\PoneD_{\rmH}
  \end{array}
  \right],
\end{split}
\end{equation}
and $\IHtoLoneD$ and $\ILtoHoneD$ are one-dimensional interpolation operators from the $\rmH$ element
to the $\rmL$ element and vice versa. 

In order for $\Dtildel{1}$ to be SBP, $\Stildel{1}$ must be skew-symmetric. 
The block-diagonal matrices in $\Stildel{1}$ are 
already skew-symmetric but the off diagonal blocks are not. 
A careful examination reveals that the interpolation operators 
need to be related in the following manner:
\begin{equation}\label{eq:SBPpreserving}
  \IHtoLoneD=\left(\PoneD_{\rmL}\right)^{-1}\left(\ILtoHoneD\right)\Tr\PoneD_{\rmH}.
\end{equation}
Such interpolation operators are denoted as SBP preserving because they lead to a macro element differentiation matrix that is an SBP operator.
The interpolation operator, $\ILtoHoneD$, 
 is constructed to exactly interpolate polynomial of degree $\pL$ and can be easily built as follows:
\begin{equation*}
\ILtoHoneD=\left[\bm{\xi}_{\rmH}^{0},\dots,\bm{\xi}_{\rmH}^{\pL}\right]\left[\bm{\xi}_{\rmL}^{0},\dots,\bm{\xi}_{\rmL}^{\pL}\right]^{-1},
\end{equation*}
where $\bm{\xi}_{\rmL}$ and $\bm{\xi}_{\rmH}$ are the one-dimensional nodal distributions in computational space of the two elements. 
The companion interpolation 
operator $\IHtoLoneD$ is sub-optimal by one degree ($\pL-1$), as a  result of satisfying the necessary 
SBP-preserving property (see Friedrich~\etal~\cite{Friedrich2018} for a thorough discussion). 

The semi-discrete skew-symmetric split operator given in \Eq~\eqref{eq:convectionsplitdisc}, 
discretized using the macro element operators $\Dtildel{l}$, and metric information $\Jtilde$, $\matAlmk{l}{m}{}$,
leads to the following:
\begin{equation}\label{eq:macro}
  \begin{split}
  &\Jtilde\frac{\mr{d}\utilde}{\mr{d}t}+
  \frac{1}{2}\sum\limits_{l,m=1}^{3}a_{m}\left(\Dtildel{l}\tildebarmatAlm{l}{m}+\tildebarmatAlm{l}{m}\Dtildel{l}\right)\utilde\\
  &-\frac{1}{2}\sum\limits_{l,m=1}^{3}a_{m}\diag\left(\utilde\right)\Dtildel{l}\tildebarmatAlm{l}{m}\tildeone
  =\bm{0},
  \end{split}
\end{equation} 
where
\begin{equation}\label{eq:marcmetrics} 
  \begin{split}
  &\utilde\equiv\left[\uL\Tr,\uH\Tr\right]\Tr, \quad
  \Jtilde\equiv\diag\left[
  \begin{array}{cc}  
  \matJk{\rmL}\\
  &\matJk{\rmH}
  \end{array}
  \right], \\
  &\matAlmk{l}{m}{}\equiv
  \left[
  \begin{array}{cc}
  \matAlmk{l}{m}{\rmL}\\
  &\matAlmk{l}{m}{\rmH}
\end{array}  
  \right].
\end{split}
\end{equation}
As for the case in \Eq~\eqref{eq:convectionsplitdisc}, a necessary condition for stability is that the metric
terms satisfy the following discrete GCL conditions:
\begin{equation}\label{eq:discGCLmacro}
  \sum\limits_{l=1}^{3}\Dtildel{l}\tildebarmatAlm{l}{m}\tildeone=\bm{0}.
\end{equation}
Recognizing that $\Dtildel{1}$ is not a tensor product operator, discrete metrics constructed using 
the analytic formalism of Vinokur and Yee~\cite{Vinokur2002a} or Thomas and Lombard~\cite{Thomas1979} 
will not in general satisfy the discrete GCL conditions \Eq~\eqref{eq:discGCLmacro}.  
The only viable alternative is to solve for discrete metrics that directly satisfy such GCL constraints.

\begin{remark}
Note that metric terms are assigned colors; e.g.,  
the time-term Jacobian: $\Jtilde$ or the volume metric terms: $\tildebarmatAlm{l}{m}$.  
Metric terms with common colors form a clique and must be formed consistently.  
For example, the time-term Jacobian and the volume metric Jacobian need not be equivalent.  Another
important clique: the surface metrics, are introduced next.
\end{remark}

The discrete GCL system~\eqref{eq:discGCLmacro} fully couples the approximation of the metrics in 
elements $\rmL$ and $\rmH$.  
Implementation, however, is facilitated by decoupling the GCL computations into individual 
element-wise contributions.
Examination of the skew-symmetric split, curvilinear derivative operator reveals how this is achieved.
The derivative operator may be expressed as
\begin{equation}\label{eq:combined}
  \begin{split}
&\Mtilde\left(\Dtildel{1}\tildebarmatAlm{1}{m}+\tildebarmatAlm{1}{m}\Dtildel{1}\right)=\\
 &\left[
  \begin{array}{cc}
   \mat{A}_{11}&
\mat{A}_{12}\\
-\mat{A}_{12}\Tr
      &\mat{A}_{22}
  \end{array}
  \right]
  +\left(\Etildel{1}\tildebarmatAlm{1}{m}+\tildebarmatAlm{1}{m}\Etildel{1}\right),\\\\
  &\mat{A}_{11}\equiv \left\{\Sxil{1}^{\rmL}\matAlmk{1}{m}{\rmL}+\matAlmk{1}{m}{\rmL}\Sxil{1}^{\rmL}\right\},\\
  &\mat{A}_{12}=  \frac{1}{2}\left\{\begin{array}{l}
    \colorbox{yellow}{\matAlmk{1}{m}{\rmL}}\left(\eNl{\rmL}\eonel{\rmH}\Tr\otimes\PoneD_{\rmL}\IHtoLoneD\otimes\PoneD_{\rmL}\IHtoLoneD\right)\\
    +\left(\eNl{\rmL}\eonel{\rmH}\Tr\otimes\PoneD_{\rmL}\IHtoLoneD\otimes\PoneD_{\rmL}\IHtoLoneD\right)\colorbox{red}{\matAlmk{1}{m}{\rmH}}\end{array}\right\},\\
    &\mat{A}_{22}\equiv\left\{\Sxil{1}^{\rmH}\matAlmk{1}{m}{\rmH}+\matAlmk{1}{m}{\rmH}\Sxil{1}^{\rmH}\right\},
  \end{split}
\end{equation}
with inter-element coupling appearing in the off-diagonal blocks $\mat{A}_{12}$ and $-\mat{A}_{12}\Tr$.  
Replacing the highlighted off-diagonal metric terms in \eqref{eq:combined} with known metric data,
decouples the GCL computation into two element-wise computations. 
The off-diagonal metric data then become forcing terms for individual GCL computations 
in each element.  Note that the ``surface metrics'' appearing in the off-diagonal blocks 
need not be equivalent to those used for the volume metrics that appear on the surfaces.

Construction details for the volume and surface metric terms appear
in~\cite{Fernandez2019_p_euler,Fernandez2018_TM}.  
Careful specification of these terms is essential
when developing (non)linearly stable discretizations for hyperbolic equations in curvilinear coordinates.
The relevant steps are summarized as follows:
\begin{itemize}
\item The highlighted surface metric terms are specified using analytic metrics, resulting in two decoupled GCL computations.
\item Each discrete GCL system is highly underdetermined  and is solved using 
an optimization approach that minimizes the difference between the numerical and analytic volume metrics
\end{itemize}
In contrast, the viscous terms need only use consistent metrics. 
Further remarks are included in Section~\ref{sec:NSNonConform}.

\subsubsection{Extension to the convection-diffusion equation}\label{sec:diffusion}
In this section, with the inviscid terms appropriately discretized, the extension of these ideas to the viscous terms 
is detailed. To make the presentation easier, and to match what will later be done for the compressible Navier--Stokes equations, 
the inviscid and IP terms are lumped into the terms $\fnc{I}_{nv}$ and $\fnc{I}_{P}$, respectively, 
while the viscous terms are simplified. Thus, Eq.~\eqref{eq:convectiondiffusionsplit} reduces to 
\begin{equation}\label{eq:diffusionsplit}
  \begin{split}
  &\Jk\frac{\partial\U}{\partial t}+\fnc{I}_{nv}= \sum\limits_{l,a=1}^{3}
  \frac{\partial}{\partial\xil{l}} \left(\Chatla{l}{a} \Thetaa{a}\right)+\fnc{I}_{P},  \\
  &\Chatla{l}{a}\equiv\sum\limits_{m=1}^{3}\Jk\frac{\partial\xil{l}}{\partial\xm{m}}\frac{\partial\xil{a}}{\partial\xm{m}}\Bm{m},\quad 
  \Thetaa{a}\equiv\frac{\partial\U}{\partial\xil{a}}.
  \end{split}
\end{equation}
 A local discontinuous Galerkin (LDG) and interior penalty approach (IP) approach is used (see references~\cite{Carpenter2014,Carpenter2015,Parsani2015b,Parsani2016}). The IP term is discussed in detail in Section~\ref{sec:NSIP}. 
 In the LDG approach, the discretization of the viscous terms in \Eq~\eqref{eq:diffusionsplit} proceeds in two steps. First, the gradients $\Thetaa{a}$ are discretized, 
 then the derivatives of the viscous fluxes are discretized. Notice that all of the metric terms are contained in the $\Chatla{l}{a}$ term and therefore the critical 
 technology required for stability is to use the SBP preserving macro element previously presented. Thus, the discretization reads
 \begin{equation}\label{eq:diffusionsplitdisc}
  \frac{\mr{d}\utilde}{\mr{d}t}+\bm{I}_{nv}=\sum\limits_{l,a=1}^{3}\Dtildel{l}\matChatla{l}{a}\thetatildea{a}\:+\: \IP,\quad
  \thetatildea{a}=\Dtildel{a}\utilde,
 \end{equation}
 where the inviscid terms are contained in $\bm{I}_{nv}$. The next theorem demonstrates that the proposed discretization of the viscous terms both telescopes the 
 viscous fluxes to the boundary and adds a dissipative term (thus mimicking the continuous energy analysis and also resulting 
 in a provably stable discretization modulo appropriate boundary SATs).
 \begin{thrm}
  Assume that the inviscid terms, $\bm{I}_{nv}$, are discretized as in Section~\ref{sec:ICASE_Eqn_NonConform} and neglect the $\IP$ term from the analysis. 
  Then the discretization~\eqref{eq:diffusionsplitdisc} 
  leads to a telescoping form that under the assumption of appropriate boundary SATs is provably stable. 
 \end{thrm}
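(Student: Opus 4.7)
The plan is to mimic the continuous energy analysis leading to~\eqref{eq:cartconvectionenergydiffusion2} at the semi-discrete level. First I would premultiply~\eqref{eq:diffusionsplitdisc} by $\utilde\Tr\Mtilde$; the time term becomes $\tfrac{1}{2}\tfrac{\mathrm{d}}{\mathrm{d}t}(\utilde\Tr\Mtilde\utilde)$, which is the discrete counterpart of $\tfrac{\mathrm{d}}{\mathrm{d}t}\int_{\Omega}\U^{2}/2\,\mathrm{d}\Omega$. The inviscid contraction $\utilde\Tr\Mtilde\bm{I}_{nv}$ telescopes to macro-element surface contributions by the analysis already carried out in Section~\ref{sec:ICASE_Eqn_NonConform}, provided that the discrete GCL conditions~\eqref{eq:discGCLmacro} are imposed, so no additional work is required on the inviscid half.

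For the viscous half, I would exploit the macro-element SBP decomposition $\Mtilde\Dtildel{l}=\Qtildel{l}=\Stildel{l}+\tfrac{1}{2}\Etildel{l}$ together with the skew-symmetry of $\Stildel{l}$, which implies the fundamental identity $\Qtildel{l}+\Qtildel{l}\Tr=\Etildel{l}$. Writing
\begin{equation*}
\utilde\Tr\Qtildel{l}\matChatla{l}{a}\thetatildea{a} = \utilde\Tr\Etildel{l}\matChatla{l}{a}\thetatildea{a} - (\Qtildel{l}\utilde)\Tr\matChatla{l}{a}\thetatildea{a},
\end{equation*}
substituting $\Qtildel{l}\utilde=\Mtilde\thetatildea{l}$, and using that the diagonal matrices $\matChatla{l}{a}$ and $\Mtilde$ commute, the viscous contraction rearranges into
\begin{equation*}
\sum_{l,a=1}^{3}\utilde\Tr\Qtildel{l}\matChatla{l}{a}\thetatildea{a} = \sum_{l,a=1}^{3}\utilde\Tr\Etildel{l}\matChatla{l}{a}\thetatildea{a} - \sum_{l,a=1}^{3}(\thetatildea{l})\Tr\Mtilde\matChatla{l}{a}\thetatildea{a}.
\end{equation*}
The second sum is the discrete analog of the continuous dissipation integral in~\eqref{eq:cartconvectionenergydiffusion2}; its non-positivity follows by observing that at each node the $3\times 3$ block $[\Chatla{l}{a}]_{l,a}$ admits the factorization $\Jk\,M\,\diag(\Bm{1},\Bm{2},\Bm{3})\,M\Tr$ with $M_{lm}=\partial\xil{l}/\partial\xm{m}$; by Assumption~\ref{assume:curv} together with $\Jk>0$ and $\Bm{m}>0$, this block is symmetric positive definite, so the full quadratic form is non-negative.

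The first sum is entirely a boundary expression because $\Etildel{l}$ has support only on the outermost faces of the macro element---the nonconforming interface coupling is already absorbed in the skew-symmetric block $\Stildel{1}$. Combining the inviscid telescoping identity with the viscous symmetrization then produces
\begin{equation*}
\tfrac{1}{2}\tfrac{\mathrm{d}}{\mathrm{d}t}(\utilde\Tr\Mtilde\utilde) + \sum_{l,a=1}^{3}(\thetatildea{l})\Tr\Mtilde\matChatla{l}{a}\thetatildea{a} = (\text{macro-boundary residuals}),
\end{equation*}
which, under the assumed boundary SATs, yields the desired discrete energy estimate. The step I expect to require the most care is verifying that the off-diagonal interface blocks inside $\Stildel{1}$---which carry the interpolation pair $\IHtoLoneD,\ILtoHoneD$---cancel cleanly under the symmetrization above; this is exactly where the SBP-preserving relation~\eqref{eq:SBPpreserving} is used, and it is what forces the two interpolation operators to be tied by $\IHtoLoneD=(\PoneD_{\rmL})^{-1}(\ILtoHoneD)\Tr\PoneD_{\rmH}$ rather than being chosen independently.
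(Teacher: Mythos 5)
Your proposal is correct and follows essentially the same route as the paper's proof: contract with $\utilde\Tr\Mtilde$, apply the macro-element SBP decomposition $\Qtildel{l}=-\Qtildel{l}\Tr+\Etildel{l}$, and split the viscous contraction into a boundary term (handled by SATs) plus a negative semi-definite volume term. The only addition is that you explicitly justify the sign of the dissipation term via the nodal factorization $\Jk\,M\,\diag(\Bm{1},\Bm{2},\Bm{3})\,M\Tr$ of the $\Chatla{l}{a}$ block, a fact the paper simply asserts.
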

 \begin{proof}
  The semi-discrete analysis proceeds as in the continuous case (the inviscid terms are completely dropped as they are assumed to be correctly constructed). 
  Multiplying \Eq~\eqref{eq:diffusionsplit} by $\utilde\Tr\Mtilde$ gives 
 \begin{equation}\label{eq:diffusionsplitdisceng1}
  \utilde\Tr\Mtilde\frac{\mr{d}\utilde}{\mr{d}t}=\sum\limits_{l,a=1}^{3}\utilde\Tr\Mtilde\Dtildel{l}\matChatla{l}{a}\Dtildel{a}\utilde.
 \end{equation}
 Using the SBP property $\Qtildel{l}=-\Qtildel{l}\Tr+\Etildel{l}$, \eqref{eq:diffusionsplitdisceng1} reduces to
 \begin{equation}\label{eq:diffusionsplitdisceng2}
  \utilde\Tr\Mtilde\frac{\mr{d}\utilde}{\mr{d}t}=\sum\limits_{l,a=1}^{3}\utilde\Tr\Etildel{l}\matChatla{l}{a}\Dtildel{a}\utilde
-\sum\limits_{l,a=1}^{3}\left(\Dtildel{l}\utilde\right)\Tr\Mtilde\matChatla{l}{a}\Dtildel{a}\utilde.
 \end{equation}
 The first term on the right-hand side is the discrete equivalent of 
 \begin{equation*}
  \oint_{\Ghat}\sum\limits_{l,a=1}^{3}\left(\Chatla{l}{a}\U\:\frac{\partial\U}{\partial\xil{a}}\right)\nxil{l}\mr{d}\Ghat
 \end{equation*}
 and appropriate SATs need to be imposed to obtain an energy estimate. The second term on the right-hand side 
 is the discrete equivalent to 
 \begin{equation*}
  -\oint_{\Ohat}\sum\limits_{l,a=1}^{3}\Chatla{l}{a}\left(\frac{\partial\U}{\partial\xil{l}}\right)\left(\frac{\partial\U}{\partial\xil{a}}\right)\mr{d}\Ohat,
 \end{equation*}
 and is negative semi-definite.
 \end{proof}
\section{Application to the compressible Navier--Stokes equations}\label{sec:NS}
In this section, first the entropy stability of the continuous compressible Navier--stokes equations is reviewed. Then, 
the nonconforming algorithm for the diffusion equation is applied to the viscous terms 
of the compressible Navier--Stokes equations. In order to obtain an entropy stable formulation, the 
viscous terms are recast in terms of entropy variables, thereby leading to a quadratic form that 
matches in form the terms in the diffusion equation. This allows a direct application of the algorithm 
for the diffusion equation to the compressible viscous terms and entropy stability is proven in an analogous fashion to 
linear stability.
\subsection{Review of the continuous entropy analysis}
The entropy stable algorithm discretizes the skew-symmetric form (in terms of the metric terms) of the compressible Navier--Stokes 
equations, which are given as
\begin{equation}\label{eq:NSCCS1}
\begin{split}
&\Jk\frac{\partial\Qk}{\partial t}+\sum\limits_{l,m=1}^{3}\frac{1}{2}
\frac{\partial }{\partial \xil{l}}\left(\Jdxildxm{l}{m}\FxmI{m}\right)
+\frac{1}{2}\Jdxildxm{l}{m}\frac{\partial \FxmI{m}}{\partial \xil{l}}
=\\&\sum\limits_{l,m=1}^{3}\frac{\partial}{\partial\xil{l}}\left(\Jdxildxm{l}{m}\FxmV{m}\right).
\end{split}
\end{equation}
The vectors $\Q$, $\FxmI{m}$, and $\FxmV{m}$ are the conserved variables, the inviscid fluxes, and the
viscous fluxes, respectively.
The vector of conserved variables is given by 
\begin{equation*}
\Q = \left[\rho,\rho\Um{1},\rho\Um{2},\rho\Um{3},\rho\E\right]\Tr,
\end{equation*}
where $\rho$ denotes the density, $\bm{\fnc{U}} = \left[\Um{1},\Um{2},\Um{3}\right]\Tr$ is the velocity 
vector, and $\E$ is the specific total energy. The inviscid fluxes are given as
\begin{equation*}
  \begin{split}
\FxmI{m} = &\left[\rho\Um{m},\rho\Um{m}\Um{1}+\delta_{m,1}\fnc{P},\rho\Um{m}\Um{2}+\delta_{m,2}\fnc{P},\right.\\
&\left.\rho\Um{m}\Um{3}+\delta_{m,3}\fnc{P},\rho\Um{m}\fnc{H}\right]\Tr,
  \end{split}
\end{equation*}
where $\fnc{P}$ is the pressure, $\fnc{H}$ is the specific total enthalpy and $\delta_{i,j}$ is the 
Kronecker delta.

The required constituent relations are
\begin{equation*}
\fnc{H} = c_{\fnc{P}}\fnc{T}+\frac{1}{2}\bm{\fnc{U}}\Tr\bm{\fnc{U}},\quad \fnc{P} = \rho R \fnc{T},\quad R = \frac{R_{u}}{M_{w}},
\end{equation*}
where $\fnc{T}$ is the temperature, $R_{u}$ is the universal gas constant, $M_{w}$ is the molecular weight of the gas, 
and $c_{\fnc{P}}$ is the specific heat capacity at constant pressure. Finally, the specific thermodynamic entropy is given as 
\begin{equation*}
s=\frac{R}{\gamma-1}\log\left(\frac{\fnc{T}}{\fnc{T}_{\infty}}\right)-R\log\left(\frac{\rho}{\rho_{\infty}}\right),\quad \gamma=\frac{c_{p}}{c_{p}-R},
\end{equation*}
where $\fnc{T}_{\infty}$ and $\rho_{\infty}$ are the reference temperature and density, respectively
(the stipulated convention has been broken here and $s$ has been used rather than $\fnc{S}$ for reasons that will be clear next). 

The viscous fluxes, $\FxmV{m}$, is given as
\begin{equation}\label{eq:Fv}
\FxmV{m}=\left[0,\tau_{1,m},\tau_{2,m},\tau_{3,m},
\sum\limits_{i=1}^{3}\tau_{i,m}\fnc{U}_{i}-\kappa\frac{\partial \fnc{T}}{\partial\xm{m}}\right]\Tr.
\end{equation} 
The viscous stresses are defined as
\begin{equation}\label{eq:tau}
\tau_{i,j} = \mu\left(\frac{\partial\fnc{U}_{i}}{\partial x_{j}}+\frac{\partial\fnc{U}_{j}}{\partial x_{i}}
-\delta_{i,j}\frac{2}{3}\sum\limits_{n=1}^{3}\frac{\partial\fnc{U}_{n}}{\partial x_{n}}\right),
\end{equation}
where $\mu(T)$ is the dynamic viscosity and $\kappa(T)$ is the thermal conductivity (not to be confused with the choice of 
parameter for element numbering). 

The compressible Navier--Stokes equations given in \eqref{eq:NSCCS1} have
a convex extension, that when integrated over the physical domain, $\Omega$, 
depends only on the boundary data and negative semi-definite dissipation terms.
This convex extension depends on an entropy function, $\fnc{S}$, 
that is constructed from the thermodynamic entropy as
\[
\fnc{S}=-\rho s,
\]
and provides a mechanism for proving stability in the $L^{2}$ norm.  
The entropy variables: $\bfnc{W}$,
are an alternate variable set related to the conservative variables via a one-to-one mapping.  They
are defined in terms of the entropy function $\fnc{S}$ by the relation
$\bfnc{W}\Tr=\partial\fnc{S}/\partial\bfnc{Q}$.  The entropy variables are used extensively in the
stability proofs to follow.  They also have the property that they 
simultaneously symmetrize
the inviscid and the viscous flux Jacobians in all spatial directions.
Greater details on continuous entropy analysis is available 
elsewhere \cite{dafermos-book-2010,Svard2015,Carpenter2015}.

The proof of entropy stability for the viscous terms in the compressible Navier--Stokes 
equations \eqref{eq:NSCCS1} is most readily demonstrated by 
exploiting the symmetrizing properties of the 
entropy variables: $\bfnc{W}\equiv\partial\fnc{S}/\partial\bfnc{Q}$.
Recasting the viscous fluxes in the entropy variables results in 
\begin{equation}\label{eq:Fxment}
  \FxmV{m}=\sum\limits_{j=1}^{3}\Cij{m}{j}\frac{\partial\bfnc{W}}{\partial x_{j}},
\end{equation}
with the flux Jacobian matrices satisfying $\Cij{m}{j} \:=\: {(\Cij{j}{m})}\Tr$.
Thus, transforming~\eqref{eq:Fxment} to curvilinear coordinates and substituting the result into~\eqref{eq:NSCCS1}, 
results in the form of the Navier--Stokes equations which is discretized:
\begin{equation}\label{eq:NSCCS}
\begin{split}
&\Jk\frac{\partial\Qk}{\partial t}+\sum\limits_{l,m=1}^{3}
\frac{1}{2}\frac{\partial }{\partial \xil{l}}\left(\Jdxildxm{l}{m}\Fxm{l}\right)
+\frac{1}{2}\Jdxildxm{l}{m}\frac{\partial \Fxm{m}}{\partial \xil{l}}
=\\&\sum\limits_{l,a=1}^{3}\frac{\partial}{\partial\xil{l}}\left(\Chatij{l}{a}\frac{\partial\bfnc{W}}{\partial \xil{a}}\right),
\end{split}
\end{equation}
where
\begin{equation}\label{eq:Chatij}
\Chatij{l}{a}=\Jdxildxm{l}{m}\sum\limits_{m,j=1}^{3}\Cij{m}{j}\frac{\partial\xil{a}}{\partial x_{j}}.
\end{equation}
The symmetric properties of the viscous flux Jacobians is preserved by the rotation into curvilinear
coordinates: \ie, $\Chatij{l}{a} \:=\: {(\Chatij{a}{l})}\Tr$. See \cite{Fisher2012phd,Parsani2015} for
more details on their construction.
This form of the compressible Navier--Stokes equations, \ie, skew-symmetric form plus the quadratic form of the viscous terms, 
is necessary for the construction of the entropy stable schemes developed in this paper. Note that the geometric conservation 
laws (GCL) are used to obtain the skew-symmetric form from the divergence form of the Navier--Stokes equations:
\begin{equation}\label{eq:GCL}
\sum\limits_{l=1}^{3}\frac{\partial}{\partial\xil{l}}\left(\Jk\frac{\partial\xil{l}}{\partial\xm{m}}\right) = 0,
\quad m = 1,2,3.
\end{equation}

For simplicity, the continuous entropy stability analysis is performed on the Cartesian form 
of the Navier--Stokes equations, given as
\begin{equation}\label{eq:NSC}
\frac{\partial\Q}{\partial t}+\sum\limits_{m=1}^{3}\frac{\partial\bfnc{F}_{\xm{m}}}{\partial\xm{m}}
=\sum\limits_{m,j=1}^{3}\frac{\partial}{\partial\xm{m}}\left(\Cij{m}{j}\frac{\partial\bfnc{W}}{\partial x_{j}}\right).
\end{equation}
Assuming the entropy $\fnc{S}$ is convex (this is guaranteed if $\rho$, $\fnc{T}>0$), then the vector of entropy variables, 
$\bfnc{W}$, simultaneously contracts all the spatial fluxes as follows (see~\cite{FisherCarpenter2013JCPb,Carpenter2014,Parsani2015,Fernandez2019_p_euler} and the references therein for more information):
\begin{equation}\label{eq:contracts}
\frac{\partial \fnc{S}}{\partial \bfnc{Q}}\frac{\partial\Fxm{m}}{\partial \xm{m}}=
\frac{\partial \fnc{S}}{\partial \bfnc{Q}}\frac{\partial\Fxm{m}}{\partial \bfnc{Q}}\frac{\partial\bfnc{Q}}{\partial \xm{m}}=
\frac{\partial\fnc{F}_{\xm{m}}}{\partial\bfnc{Q}}\frac{\partial\bfnc{Q}}{\partial \xm{m}}=
\frac{\partial\fnc{F}_{\xm{m}}}{\partial \xm{m}},\; m=1,2,3,
\end{equation}
where the scalars $\fnc{F}_{\xm{m}}(\bfnc{Q})$ are the entropy fluxes in the $\xm{m}$-direction. 
Therefore, multiplying~\eqref{eq:NSC} by $\bfnc{W}\Tr$, integrating over space gives
{\small
\begin{equation}\label{eq:NSCe1}
\begin{split}
&\displaystyle\int_{\Omega}\left(\bfnc{W}\Tr\frac{\partial\Q}{\partial t}+\sum\limits_{m=1}^{3}\frac{\partial\fnc{F}_{\xm{m}}}{\partial\xm{m}}\right)\mr{d}\Omega
=\displaystyle\int_{\Omega}\bfnc{W}\Tr\left(\sum\limits_{m,j=1}^{3}\frac{\partial}{\partial\xm{m}}\left(\Cij{m}{j}\frac{\partial\bfnc{W}}{\partial x_{j}}\right)\right)\mr{d}\Omega.
\end{split}
\end{equation}
}
The left-hand side of~\eqref{eq:NSCe1} reduces using~\eqref{eq:contracts} and 
$\bfnc{W}\Tr\frac{\partial\Q}{\partial t}=\frac{\partial\fnc{S}}{\partial\bfnc{Q}}\frac{\partial\bfnc{Q}}{\partial t}=\frac{\partial\fnc{S}}{\partial t}$; integration by parts
is used on the right-hand side to obtain 
\begin{equation}\label{eq:NSCe2}
\begin{split}
&\displaystyle\int_{\Omega}\frac{\partial\fnc{S}}{\partial t}\mr{d}\Omega
+\oint_{\Gamma}\sum\limits_{m=1}^{3}\fnc{F}_{\xm{m}}n_{\xm{m}}\mr{d}\Gamma
=\\
&\displaystyle\oint_{\Gamma}\left(\sum\limits_{m,j=1}^{3}\bfnc{W}\Tr\Cij{m}{j}\frac{\partial\bfnc{W}}{\partial\xil{l}}n_{\xm{m}}\right)\mr{d}\Gamma
-\displaystyle\int_{\Omega}\left(\sum\limits_{m,j=1}^{3}\frac{\partial\bfnc{W}\Tr}{\partial\xm{m}}\Cij{m}{j}
\frac{\partial\bfnc{W}}{\partial x_{j}}\right)\mr{d}\Omega.
\end{split}
\end{equation}
Exploiting the symmetries of the $\Cij{m}{j}$ matrices, 
the last term on the right-hand side of~\eqref{eq:NSCe2}, \ie $\displaystyle\int_{\Omega}\left(\sum\limits_{m,j=1}^{3}\frac{\partial\bfnc{W}\Tr}{\partial\xm{m}}\Cij{m}{j}
\frac{\partial\bfnc{W}}{\partial x_{j}}\right)\mr{d}\Omega=\displaystyle\int_{\Omega}\fnc{C}\mr{d}\Omega$, 
can be recast as
\begin{equation}\label{eq:NSdissC}
\fnc{C}\equiv
-
\left[
\frac{\partial\bfnc{W}\Tr}{\partial\xm{1}},
\frac{\partial\bfnc{W}\Tr}{\partial\xm{1}},
\frac{\partial\bfnc{W}\Tr}{\partial\xm{1}}
\right]
\left[
\begin{array}{ccc}
\Cij{1}{1}&\Cij{1}{2}&\Cij{1}{3}\\
\Cij{1}{2}\Tr&\Cij{2}{2}&\Cij{2}{3}\\
\Cij{1}{3}\Tr&\Cij{2}{3}\Tr&\Cij{3}{3}
\end{array}
\right]
\left[
\begin{array}{c}
\frac{\partial\bfnc{W}}{\partial\xm{1}}\\\\
\frac{\partial\bfnc{W}}{\partial\xm{2}}\\\\
\frac{\partial\bfnc{W}}{\partial\xm{3}}
\end{array}
\right].
\end{equation}
The term $\fnc{C}$ is negative semi-definite, therefore~\eqref{eq:NSCe2} reduces to the following inequality:
\begin{equation}\label{eq:NSCe3}
\begin{split}
&\displaystyle\int_{\Omega}\frac{\partial\fnc{S}}{\partial t}\mr{d}\Omega\leq
\oint_{\Gamma}\sum\limits_{m=1}^{3}\left(-\fnc{F}_{\xm{m}}+
\sum\limits_{j=1}^{3}\bfnc{W}\Tr\Cij{m}{j}\frac{\partial\bfnc{W}}{\partial\xil{l}}\right)n_{\xm{m}}\mr{d}\Gamma.
\end{split}
\end{equation}
To obtain a bound on the solution, the inequality~\eqref{eq:NSCe3} is integrated in time and assuming 
nonlinearly well posed boundary conditions and initial condition, and positivity of density and temperature, the result can be turned into a 
bound on the solution in terms of the data of the problem (see, for instance, \cite{dafermos-book-2010,Svard2015}).
\subsection{A $p$-nonconforming algorithm for the compressible Navier--Stokes equations}\label{sec:NSNonConform}
In this section, the $p$-nonconforming algorithm for the convection-diffusion equation is 
applied to the compressible Navier--Stokes equations. Herein, the focus is on discretizing the 
viscous portion of the equations (the details on the inviscid components are 
in~\cite{Fernandez2019_p_euler,Fernandez2018_TM}). 
Recasting the viscous fluxes in terms of entropy variables, and discretizing them using macro element SBP operators
yields the form
\begin{equation}\label{eq:MacroForm}
\sum\limits_{l,a=1}^{3}\frac{\partial}{\partial\xil{l}}\left(\Chatla{l}{a}\frac{\partial\bfnc{W}}{\partial \xil{a}}\right)
 \approx  \sum\limits_{l,a=1}^{3} \Dtildel{l}\matChatla{l}{a}\Dtildel{a}\wtilde. 
\end{equation}
Note that \Eq~\eqref{eq:MacroForm} is precisely the symmetric generalization of the convection-diffusion operator 
to a viscous system;  the stability proof follows immediately (included later in document).  
Greater insight is provided by expressing the macro element
as an element-wise operator, a form that is closer to how a practitioner might implement the algorithm.

The discretization on the $\rmL$ element reads
\begin{equation}\label{eq:NSL}
  \begin{split}
  &\matJk{\rmL}\frac{\mr{d}\qL}{\mr{d}t}+\bm{I}_{ns}^{\rmL}=\sum\limits_{l,a=1}^{3}
  \Dxil{a}^{\rmL}\matChatla{l}{a}^{\rmL}\thetaa{a}^{\rmL}\\
  &-\frac{1}{2}\left(\M^{\rmL}\right)^{-1}\sum\limits_{a=1}^{3}
  \left\{\left(\eNl{\rmL}\left(\eNl{\rmL}\right)\Tr\otimes\PxiloneD{\rmL}\otimes\PxiloneD{\rmL}\otimes\Imat{5}\right)
  \matChatla{1}{a}^{\rmL}\thetaa{a}^{\rmL}\right.\\
  -&\left.\left(\eNl{\rmL}\left(\eonel{\rmH}\right)\Tr\otimes\PxiloneD{\rmL}\IHtoLoneD\otimes\PxiloneD{\rmL}\IHtoLoneD\otimes\Imat{5}\right)
  \matChatla{1}{a}^{\rmH}\thetaa{a}^{\rmH}
  \right\}+\IP^{\rmL},
  \end{split}
\end{equation}
where the interior penalty term, $\IP^{\rmL}$ adds interface dissipation for the viscous terms and will be discussed shortly. 
Moreover, the viscous flux is discretized as
\begin{equation}\label{eq:NSLtheta}
  \begin{split}
  &\thetaa{a}^{\rmL}=\Dxil{a}^{\rmL}\wk{\rmL}-\frac{1}{2}\left(\M^{\rmL}\right)^{-1}
  \left\{\left(\eNl{\rmL}\left(\eNl{\rmL}\right)\Tr\otimes\PxiloneD{\rmL}\otimes\PxiloneD{\rmL}\otimes\Imat{5}\right)
  \wk{\rmL}\right.\\
  -&\left.\left(\eNl{\rmL}\left(\eonel{\rmH}\right)\Tr\otimes\PxiloneD{\rmL}\IHtoLoneD\otimes\PxiloneD{\rmL}\IHtoLoneD\otimes\Imat{5}\right)
  \wk{\rmH}
  \right\}.
  \end{split}
\end{equation}
Similarly, on the $\rmH$ element
\begin{equation}\label{eq:NSH}
  \begin{split}
  &\matJk{\rmH}\frac{\mr{d}\qH}{\mr{d}t}+\bm{I}_{ns}^{\rmH}=\sum\limits_{l,a=1}^{3}
  \Dxil{a}^{\rmH}\matChatla{l}{a}^{\rmH}\thetaa{a}^{\rmH}\\
 &+\frac{1}{2}\left(\M^{\rmH}\right)^{-1}\sum\limits_{a=1}^{3}
  \left\{\left(\eonel{\rmH}\left(\eonel{\rmH}\right)\Tr\otimes\PxiloneD{\rmH}\otimes\PxiloneD{\rmH}\otimes\Imat{5}\right)
  \matChatla{1}{a}^{\rmH}\thetaa{a}^{\rmH}\right.\\
  -&\left.\left(\eonel{\rmH}\left(\eNl{\rmL}\right)\Tr\otimes\PxiloneD{\rmH}\ILtoHoneD\otimes\PxiloneD{\rmH}\ILtoHoneD\otimes\Imat{5}\right)
  \matChatla{1}{a}^{\rmL}\thetaa{a}^{\rmL}
  \right\}+\IP^{\rmH},
  \end{split}
\end{equation}
where 
\begin{equation}\label{eq:NSHtheta}
  \begin{split}
  &\thetaa{a}^{\rmH}=\Dxil{a}^{\rmH}\wk{\rmH}+\frac{1}{2}\left(\M^{\rmH}\right)^{-1}
  \left\{\left(\eonel{\rmH}\left(\eonel{\rmH}\right)\Tr\otimes\PxiloneD{\rmH}\otimes\PxiloneD{\rmH}\otimes\Imat{5}\right)
  \wk{\rmH}\right.\\
  -&\left.\left(\eonel{\rmH}\left(\eNl{\rmL}\right)\Tr\otimes\PxiloneD{\rmH}\ILtoHoneD\otimes\PxiloneD{\rmH}\ILtoHoneD\otimes\Imat{5}\right)
  \wk{\rmL}
  \right\}.
  \end{split}
\end{equation}
The entropy stability properties of the resulting algorithm are given in the next theorem. 
\begin{thrm}
  Assume that the inviscid and IP terms are entropy conservative/stable and that appropriate SATs are utilized. 
  Then the discretization modeled by \Eq~\eqref{eq:NSL} through \Eq~\eqref{eq:NSHtheta} results in an 
  entropy stable discretization.
\end{thrm}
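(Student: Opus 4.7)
The plan is to mirror, at the semi-discrete level, the continuous entropy identity \eqref{eq:NSCe2}-\eqref{eq:NSdissC}, reducing the viscous analysis to the symmetric-matrix generalization of the scalar telescoping argument already established in Section~\ref{sec:diffusion}. The inviscid and $\IP$ contributions are removed from the volumetric analysis by hypothesis, so only the quadratic viscous form must be controlled; boundary and interface surface terms are absorbed by the assumed SATs.

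First, I would reassemble the two element-wise systems \eqref{eq:NSL}-\eqref{eq:NSHtheta} into a single macro-element system of the form
\begin{equation*}
\Jtilde \frac{\mr{d}\qtilde}{\mr{d}t} + \bm{I}_{ns}
= \sum_{l,a=1}^{3}\Dtildel{l}\matChatla{l}{a}\thetatildea{a} + \IP,
\qquad \thetatildea{a} = \Dtildel{a}\wtilde,
\end{equation*}
where $\Dtildel{l}$ is the SBP-preserving macro-element operator defined in \eqref{eq:marcoD}. This step is mostly bookkeeping, but it is essential: the interface corrections in \eqref{eq:NSLtheta}-\eqref{eq:NSHtheta} must reproduce exactly the off-diagonal blocks generated by $\Qtildel{1}$ acting on the stacked vector $\wtilde$, which relies critically on the SBP-preserving relation \eqref{eq:SBPpreserving} linking $\IHtoLoneD$ and $\ILtoHoneD$.

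Next, I would contract the macro-element system from the left with $\wtilde\Tr \Mtilde$. The time term reduces to the discrete rate of change of the entropy via the pointwise relation $\bfnc{W}\Tr \partial\bfnc{Q}/\partial t = \partial\fnc{S}/\partial t$ together with the diagonality of $\Mtilde$ and $\Jtilde$. Applying the SBP decomposition $\Mtilde \Dtildel{l} = -\Dtildel{l}\Tr \Mtilde + \Etildel{l}$ to the viscous block and using $\thetatildea{a} = \Dtildel{a}\wtilde$ then yields
\begin{equation*}
\sum_{l,a=1}^{3}\wtilde\Tr \Mtilde \Dtildel{l}\matChatla{l}{a}\Dtildel{a}\wtilde
= \sum_{l,a=1}^{3}\wtilde\Tr \Etildel{l}\matChatla{l}{a}\Dtildel{a}\wtilde
- \sum_{l,a=1}^{3}(\Dtildel{l}\wtilde)\Tr \Mtilde \matChatla{l}{a}\Dtildel{a}\wtilde,
\end{equation*}
the discrete counterpart of \eqref{eq:NSCe2}. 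The first sum is a surface contribution absorbed by the boundary SATs. The second sum is the discrete analog of \eqref{eq:NSdissC}; it is non-positive because $\Mtilde$ is symmetric positive definite and, pointwise at the collocation nodes, the $3\times 3$ block matrix $[\matChatla{l}{a}]_{l,a}$ inherits the symmetric positive semi-definite structure of $[\Chatij{l}{a}]$, with the block-diagonal layout across the L and H elements preserving this property globally.

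The main obstacle is the first step. The LDG auxiliary variables $\thetaa{a}^{\rmL}$ and $\thetaa{a}^{\rmH}$ only introduce interface coupling in the $\xil{1}$ direction orthogonal to the nonconforming interface; verifying that their stacked concatenation coincides exactly with $\Dtildel{a}\wtilde$ in every coordinate direction, with no spurious cross terms in the tangential directions, is where the careful accounting must be done. Once this identification is secured, the argument is a direct symmetric-matrix analogue of the scalar telescoping identity of Section~\ref{sec:diffusion}, and entropy stability follows from the non-positivity of the resulting quadratic form together with the boundary control supplied by the SATs.
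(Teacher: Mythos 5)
Your proposal is correct and follows essentially the same route as the paper: recast the element-wise LDG forms as the macro-element system, contract with $\wtilde\Tr\Mtilde$, use the SBP decomposition $\Qtildel{l}=-\Qtildel{l}\Tr+\Etildel{l}$ to split the viscous term into a boundary contribution (handled by SATs) and a negative semi-definite quadratic form. The only difference is cosmetic: you flag the reassembly of \eqref{eq:NSL}--\eqref{eq:NSHtheta} into the macro form as the delicate step and spell out why the quadratic form is non-positive, whereas the paper simply asserts both.
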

\begin{proof}
  For simplicity, the discretization is recast using the macro element SBP operators and the inviscid and $\IP$ terms are dropped, \ie, 
  \begin{equation}\label{eq:macroNS}
    \matJk{}\frac{\partial\qtilde}{\partial t} =\sum\limits_{l,a=1}^{3}
    \Dtildel{l}\matChatla{l}{a}\Dtildel{a}\wtilde.
  \end{equation}
  Multiplying \Eq~\eqref{eq:macroNS} by $\wtilde\Tr\Mtilde$ gives
  \begin{equation}\label{eq:macroNSent1}
    \wtilde\Tr\Mtilde\matJk{}\frac{\partial\qtilde}{\partial t} =\sum\limits_{l,a=1}^{3}
    \wtilde\Tr\Mtilde\Dtildel{l}\matChatla{l}{a}\Dtildel{a}\wtilde.
  \end{equation}
  The temporal term contracts as follows:
  \begin{equation}\label{eq:temporal}
\wtilde\Tr\Mtilde\matJk{}\frac{\partial\qtilde}{\partial t} = 
\tildeone\Tr\Mtilde\matJk{}\diag\left(\wtilde\right)\frac{\partial\qtilde}{\partial t}=
\barones{}\Tr\Mtilde\matJk{}\frac{\partial\stilde}{\partial t}.
  \end{equation}
Using \Eq~\eqref{eq:temporal} and the SBP property $\Qtildel{l}=-\Qtildel{l}\Tr+\Etildel{l}$ on the spatial terms results in
  \begin{equation}\label{eq:macroNSent2}
    \barones{}\Tr\Mtilde\matJk{}\frac{\partial\stilde}{\partial t}=\sum\limits_{l,a=1}^{3}
    \wtilde\Tr\Etildel{l}\matChatla{l}{a}\Dtildel{a}\wtilde-
    \sum\limits_{l,a=1}^{3}
    \left(\Dtildel{l}\wtilde\right)\Tr\Mtilde\matChatla{l}{a}\Dtildel{a}\wtilde.
  \end{equation}
  The first set of terms on the right-hand side are boundary terms that approximate the surface integrals 
  \begin{equation}
    \sum\limits_{l,a=1}^{3}\oint_{\tilde{\Gamma}}\Chatla{l}{a}\bfnc{W}\Tr
    \frac{\partial\bfnc{W}}{\partial\xil{a}}\nxil{l}\mr{d}\tilde{\Gamma}.
  \end{equation}
  Thus, the scheme telescopes to the boundaries where appropriate SATs need to be imposed to 
  obtain a stability statement. The second set of terms on the right-hand side are negative semi-definite 
  and therefore add dissipation and approximate the following set of volume integrals: 
  \begin{equation*}
    \sum\limits_{l,a=1}^{3}\oint_{\tilde{\Omega}}\Chatla{l}{a}\left(\frac{\partial\bfnc{W}}{\partial\xil{l}}\right)\Tr
    \left(\frac{\partial\bfnc{W}}{\partial\xil{a}}\right)\mr{d}\tilde{\Omega}.
  \end{equation*}
\end{proof}

\begin{remark}
  In contrast to the inviscid terms, the viscous terms are constructed with only one set of metrics. 
  They are used to construct both sets of metrics appearing in the $\matChatla{l}{a}$ matrices. Herein, analytic 
  metrics are used to approximate the viscous terms, although one set of numerical metrics would also suffice.
\end{remark}
\begin{remark}
  The base Euler scheme is element-wise conservative and freestream preserving~\cite{Fernandez2019_p_euler,Fernandez2018_TM}. 
  Element-wise conservation is proven by using the framework of Shi and Shu~\cite{Shi2018} where the scheme is algebraically manipulated into a finite-volume like 
  scheme by discretely integrating the semi-discrete equations element by element. For the viscous contributions to maintain the conservation 
  properties of the base Euler scheme, a unique flux needs to result at the interfaces; this is readily demonstrated by discretely integrating 
  the semi-discrete equations. Similarly, freestream preservation is easily demonstrated by inserting a constant solution 
  into the viscous discretization. 
\end{remark}
\subsubsection{The internal penalty (\IP) terms}\label{sec:NSIP}
The $\IP$ terms are design order zero interface dissipation terms that are constructed to damp neutrally stable ``odd-even'' eigenmodes that arise from 
the LDG viscous operator.  They are cast in terms of the entropy variables so that stability can be proven. 
Specifically, the $\IP$ terms take the following form:
\begin{equation}\label{eq:IPmort}
\begin{split}
\IP^{\rmL}\equiv&-\frac{1}{2}\left(\M^{\rmL}\right)^{-1}\left(\RL\right)\Tr\Porthol{1}^{\rmL}\left(\matJG{\rmL}\right)^{-1}\tilde{\mat{C}}_{1,1}^{\rm{L}}\left(\RL\wk{\rmL}-\IHtoL\RH\wk{\rmH}\right)\\
&-\frac{1}{2}\left(\M^{\rmL}\right)^{-1}\RL\Tr\Porthol{1}^{\rmL}
\IHtoL\left(\matJG{\rmH}\right)^{-1}\tilde{\mat{C}}_{1,1}^{\rmH}\left(\ILtoH\RL\wk{\rmL}-\RH\wk{\rmH}\right),\\\\
\IP^{\rmH}\equiv&-\frac{1}{2}\left(\M^{\rmH}\right)^{-1}\left(\RH\right)\Tr\Porthol{1}^{\rmH}\left(\matJG{\rmH}\right)^{-1}\tilde{\mat{C}}_{1,1}^{\rm{H}}\left(\RH\wk{\rmH}-\ILtoH\RL\wk{\rmL}\right)\\
&-\frac{1}{2}\left(\M^{\rmH}\right)^{-1}\RH\Tr\Porthol{1}^{\rmH}
\ILtoH\left(\matJG{\rmL}\right)^{-1}\tilde{\mat{C}}_{1,1}^{\rmL}\left(\IHtoL\RH\wk{\rmH}-\RL\wk{\rmL}\right),\\
\end{split}
\end{equation}
where 
\begin{equation*}
\begin{split}
\tilde{\mat{C}}_{1,1}^{\rm{L}}\equiv\frac{1}{2}\left(\hat{\mat{C}}_{1,1}\left(\RL\qL\right)+
                                                     \hat{\mat{C}}_{1,1}\left(\IHtoL\RH\qH\right)\right),\qquad \\
\tilde{\mat{C}}_{1,1}^{\rm{H}}\equiv\frac{1}{2}\left(\hat{\mat{C}}_{1,1}\left(\RH\qH\right)+
                                                     \hat{\mat{C}}_{1,1}\left(\ILtoH\RL\qL\right)\right),
\end{split}
\end{equation*}
and the matrices $\matJG{\rmL}$ and $\matJG{\rmH}$ are diagonal matrices with the diagonal entries of $\matJk{\rmL}$ 
and $\matJk{\rmH}$ associated with 
the interface nodes of element $\pL$ and $\pH$, respectively. Moreover, the necessary operators are defined as
\begin{equation*}
  \begin{split}
  &\RL\equiv\eNl{\rmL}\Tr\otimes\Imat{\Nl{\rmL}}\otimes\Imat{\Nl{\rmL}}\otimes\Imat{5},\quad
  \RH\equiv\eonel{\rmH}\Tr\otimes\Imat{\Nl{\rmH}}\otimes\Imat{\Nl{\rmH}}\otimes\Imat{5},\\
  &\Porthol{l}^{\rmL}\equiv\PoneD_{\rmL}\otimes\PoneD_{\rmL}\otimes\Imat{5},\quad
\Porthol{l}^{\rmH}\equiv\PoneD_{\rmH}\otimes\PoneD_{\rmH}\otimes\Imat{5},\\
&\IHtoL\equiv\IHtoLoneD\otimes\IHtoLoneD\otimes\Imat{5},\quad
\ILtoH\equiv\ILtoHoneD\otimes\ILtoHoneD\otimes\Imat{5}.
  \end{split}
\end{equation*}

The stability/dissipativeness of the interface $\bm{I}_{\rm{P}}$ terms is now proven.
\begin{thrm}
The added $\IP$ terms preserves the entropy stable properties of the discretization.
\end{thrm}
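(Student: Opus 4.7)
The plan is to show that the combined contributions of $\IP^{\rmL}$ and $\IP^{\rmH}$ to the entropy balance, obtained by left-multiplying the two element-wise semi-discrete equations by $\wk{\rmL}\Tr\M^{\rmL}$ and $\wk{\rmH}\Tr\M^{\rmH}$ respectively and summing, reduce to a sum of two negative semi-definite quadratic forms in the interface jumps of the entropy variables. Since all of the other operators in the discretization have already been shown (by the preceding theorem) to telescope to the boundary plus a non-positive volume contribution, establishing this non-positivity for the $\IP$ contribution is enough to conclude that entropy stability is preserved.

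First, I would evaluate $\wk{\rmL}\Tr\M^{\rmL}\IP^{\rmL} + \wk{\rmH}\Tr\M^{\rmH}\IP^{\rmH}$. The factors of $\M^{\rmL}$ and $\M^{\rmH}$ cancel directly against the $(\M^{\rmL})^{-1}$ and $(\M^{\rmH})^{-1}$ prefactors in the definitions of $\IP^{\rmL}$ and $\IP^{\rmH}$, leaving four terms: two built from $\tilde{\mat{C}}_{1,1}^{\rmL}$ and two built from $\tilde{\mat{C}}_{1,1}^{\rmH}$. These four terms naturally split into two groups that share a common averaged viscous matrix.

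The key algebraic step is the SBP-preserving relation~\eqref{eq:SBPpreserving}, which at the surface tensor-product level becomes
\begin{equation*}
\Porthol{1}^{\rmL}\IHtoL \;=\; \left(\ILtoH\right)\Tr\Porthol{1}^{\rmH},
\end{equation*}
together with its transpose. Applying this identity to the cross terms produced in the preceding step lets me pull a common surface norm matrix out of each pair. The $\tilde{\mat{C}}_{1,1}^{\rmL}$-pair then collapses to
\begin{equation*}
-\tfrac{1}{2}\left(\RL\wk{\rmL}-\IHtoL\RH\wk{\rmH}\right)\Tr\Porthol{1}^{\rmL}\left(\matJG{\rmL}\right)^{-1}\tilde{\mat{C}}_{1,1}^{\rmL}\left(\RL\wk{\rmL}-\IHtoL\RH\wk{\rmH}\right),
\end{equation*}
and, symmetrically, the $\tilde{\mat{C}}_{1,1}^{\rmH}$-pair collapses to the analogous expression in the jump $\ILtoH\RL\wk{\rmL}-\RH\wk{\rmH}$ weighted by $\Porthol{1}^{\rmH}(\matJG{\rmH})^{-1}\tilde{\mat{C}}_{1,1}^{\rmH}$.

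To finish, I would invoke the fact that $\Porthol{1}^{\rmL}$ and $\Porthol{1}^{\rmH}$ are symmetric positive-definite surface norms, $\matJG{\rmL},\matJG{\rmH}$ are positive diagonal metric Jacobians, and the averaged interface matrices $\tilde{\mat{C}}_{1,1}^{\rmL},\tilde{\mat{C}}_{1,1}^{\rmH}$ are symmetric positive semi-definite since they are convex combinations of evaluations of $\hat{\mat{C}}_{1,1}(\bfnc{Q})$, which is symmetric PSD for admissible states by the entropy-symmetrization of the viscous flux Jacobians. Consequently both quadratic forms are non-positive, the $\IP$ contribution to $\mathrm{d}/\mathrm{d}t\int\fnc{S}\,\mathrm{d}\Omega$ is non-positive, and the entropy stability established in the previous theorem is preserved. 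The main obstacle I anticipate is the bookkeeping for the third group of terms: matching the interpolation operators with the correct norm matrices on each side so that the SBP-preserving identity can be used to produce genuine complete-square jumps rather than sign-indefinite cross terms; once that matching is verified the positivity of the remaining symmetric factors is immediate.
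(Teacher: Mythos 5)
Your proposal follows essentially the same route as the paper's proof: contract the $\IP$ terms with $\wk{\rmL}\Tr\M^{\rmL}$ and $\wk{\rmH}\Tr\M^{\rmH}$, cancel the norm matrices, and use the SBP-preserving relation~\eqref{eq:SBPpreserving} (in its tensor-product surface form) to collapse the four resulting terms into the same two negative semi-definite quadratic forms in the interface jumps that the paper exhibits. Your additional justification of the semi-definiteness of the averaged $\tilde{\mat{C}}_{1,1}$ matrices as convex combinations of PSD evaluations is a correct filling-in of a step the paper leaves implicit.
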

\begin{proof}
Contracting the $\IP{}$ terms from element $\pL$ and $\pH$, adding the results 
and rearranging gives
\begin{multline}
\wk{\rmL}\Tr\M^{\rmL}\IP^{\rmL}+\wk{\rmL}\Tr\M^{\rmH}\IP^{\rmH}=\\
-\left(\RL\wk{\rmL}-\IHtoL\RH\wk{\rmH}\right)\Tr\RL\Tr\Porthol{1}^{\rmL}\matJG{\rmL}^{-1}\hat{\mat{C}}_{1,1}^{\rm{L}}
\left(\RL\wk{\rmL}-\IHtoL\RH\wk{\rmH}\right)\\
-\left(\ILtoH\RL\wk{\rmL}-\RH\wk{\rmH}\right)\Tr\Porthol{1}^{rmH}\matJG{\rmH}^{-1}\hat{\mat{C}}_{1,1}^{\rm{H}}
\left(\ILtoH\RL\wk{\rmL}-\RH\wk{\rmH}\right),
\end{multline}
where the SBP preserving properties of the interpolation operators~\eqref{eq:SBPpreserving} have been used. 
The resulting terms are negative semi-definite.
\end{proof} 
\begin{remark}
 The element-wise conservation properties of the scheme are maintained by the IP terms. This is straightforward to demonstrate 
  by discretely integrating the semi-discrete equations. 
  Moreover, the IP terms maintain the freestream preserving properties of the scheme, which can be easily seen by inserting a constant solution 
  into the IP terms and noting that they reduce to zero.
\end{remark}
In Section~\ref{sec:numNS}, two problems are used to characterize the nonconforming algorithms 1) the viscous shock problem, and 2) the Taylor-Green vortex problem. For both, the boundary conditions are weakly imposed by reusing the interface SAT mechanics. For the viscous shock problem, 
the adjoining element's contribution is replaced with the analytical solution. 
\section{Numerical experiments}\label{sec:numNS}
This section presents numerical evidence that the proposed $p$-nonconforming algorithm retains 
the accuracy and robustness of the spatial conforming discretization 
reported in \cite{Carpenter2014,Parsani2015,Carpenter2016,Parsani2016}. We use the unstructured grid solver 
developed at the Extreme Computing Research Center (ECRC) at KAUST. This parallel framework is built on 
top of the Portable and Extensible Toolkit for Scientific computing (PETSc)~\cite{petsc-user-ref}, its mesh topology 
abstraction (DMPLEX)~\cite{KnepleyKarpeev09} and scalable ordinary differential equation (ODE)/differential algebraic equations (DAE) solver library~\cite{abhyankar2018petsc}. 
The systems of ordinary differential equations arising from the spatial
discretizations are integrated using the fourth-order
accurate Dormand--Prince method \cite{dormand_rk_1980} endowed with an adaptive time stepping technique based on digital signal processing \cite{Soderlind2003,Soderlind2006}. To make the temporal error negligible, a tolerance of $10^{-8}$ is always used for the time-step adaptivity. The two-point entropy consistent flux
of Chandrashekar~\cite{Chandrashekar2013} is used for all the test cases.

The errors are computed using volume scaled (for the $L^{1}$ and $L^{2}$ norms) discrete norms as follows:
\begin{equation*}
\begin{split}
  &\|\bm{u}\|_{L^{1}}=\Omega_{c}^{-1}\sum\limits_{\kappa=1}^{K}\ones{\kappa}\Tr\M^{\kappa}\matJk{\kappa}\textrm{abs}\left(\bm{u}_{\kappa}\right),\\
  &\|\bm{u}\|_{L^{2}}^{2}=\Omega_{c}^{-1}\sum\limits_{\kappa=1}^{K}\bm{u}_{\kappa}\M^{\kappa}\matJk{\kappa}\bm{u}_{\kappa},\\
&\|\bm{u}\|_{L^{\infty}}=\max\limits_{\kappa=1\dots K}\textrm{abs}\left(\bm{u}_{\kappa}\right),
\end{split}
\end{equation*}
where $\Omega_{c}$ is the volume of $\Omega$ computed as 
$\Omega_{c}\equiv\sum\limits_{\kappa=1}^{K}\ones{\kappa}\Tr\M^{\kappa}\matJk{\kappa}\ones{\kappa}$.

\subsection{Viscous shock propagation}
For verification and characterization of the full compressible Navier--Stokes algorithm, the propagation of a viscous 
shock is used. For this test case an exact time-dependent solution is known, under the assumption of a Prandtl number of 
$Pr=3/4$. The momentum $\fnc{V}(x_1)$ satisfies the ODE
\begin{equation}
\begin{split}
&\alpha\fnc{V}\frac{\partial\fnc{V}}{\partial\xm{1}}-(\fnc{V}-1)(\fnc{V}-\fnc{V}_{f})=0;\;-\infty\leq\xm{1}\leq+\infty,\; t\ge 0,\\
\end{split}
\end{equation}
whose solution can be written implicitly as
{\small
\begin{equation}\label{eq:implicit_sol_vs}
  \begin{split}
&\xm{1}-\frac{1}{2}\alpha\left(\log\left|(\fnc{V}(x_1)-1)(\fnc{V}(x_1)-\fnc{V}_{f})\right|\right.\\
&\left.+\frac{1+\fnc{V}_{f}}{1-\fnc{V}_{f}}\log\left|\frac{\fnc{V}(x_1)-1}{\fnc{V}(x_1)-\fnc{V}_{f}}\right|\right) = 0,
  \end{split}
\end{equation}
}
where
\begin{equation}
\fnc{V}_{f}\equiv\frac{\fnc{U}_{L}}{\fnc{U}_{R}},\qquad
\alpha\equiv\frac{2\gamma}{\gamma + 1}\frac{\,\mu}{Pr\dot{\fnc{M}}}.
\end{equation}
Here $\fnc{U}_{L/R}$ are known velocities to the left and right of the shock at
$-\infty$ and $+\infty$, respectively, $\dot{\fnc{M}}$ is the constant mass
flow across the shock, $Pr$ is the Prandtl number, and $\mu$ is the dynamic
viscosity.
The mass and total enthalpy are constant across the shock. Moreover,
the momentum and energy equations become redundant.

For our tests, $\fnc{V}$ is computed from Equation \eqref{eq:implicit_sol_vs}
to machine precision using bisection.
The moving shock solution is obtained by applying a uniform translation to the above solution.
The shock is located at the center of the domain at $t=0$ and the following
values are used: $M_{\infty}=2.5$, $Re_{\infty}=10$, and $\gamma=1.4$.
The domain is given by 
\[
	\xm{1}\in[-0.5,0.5], \;\xm{2}\in[-0.5,0.5],\;\xm{3}\in[-0.5,0.5],\; t\in[0,0.5].
\]

A grid convergence study is performed to investigate the order of convergence
of the nonconforming algorithm. We use a sequence of nested grids 
obtained from a base grid (i.e., the coarsest grid) which is constructed as follows:
\begin{itemize}
\item Divide the computational domain with four hexahedral elements in each coordinate direction.
\item Assign the solution polynomial degree in each element to a random integer chosen uniformly from the set $\{p_s, p_s+1\}$.
\item Approximate with a $p_s$th-order polynomial the element interfaces.
\item Perturb the nodes that are used to define the $p_s$th-order polynomial
  approximation of the element interfaces as follows:
\begin{equation*}
\begin{split}
&x_1 = x_{1,*} + \frac{1}{15} L_1 \cos \left(   a \right) \cos \left( 3 b \right) \sin \left( 4 c \right), \\
&x_2 = x_{2,*} + \frac{1}{15} L_2 \sin \left( 4 a \right) \cos \left(   b \right) \cos \left( 3 c \right),\\
&x_3 = x_{3,*} + \frac{1}{15} L_3 \cos \left( 3 a \right) \sin \left( 4 b \right) \cos \left(   c \right),
\end{split}
\end{equation*}
where
\begin{equation*}
\begin{split}
  &a = \frac{\pi}{L_1} \left(x_{1,*}-\frac{x_{1,H}+x_{1,L}}{2}\right), \\
  &b = \frac{\pi}{L_2} \left(x_{2,*}-\frac{x_{2,H}+x_{2,L}}{2}\right),\\
  &c = \frac{\pi}{L_3} \left(x_{3,*}-\frac{x_{3,H}+x_{3,L}}{2}\right).
\end{split}
\end{equation*}
The lengths $L_1$, $L_2$ and $L_3$ are the dimensions of the computational domain in the three
coordinate directions and the sub-script $*$ is the unperturbed coordinates of
the nodes. This step yields a ``perturbed'' $p_s$th-order interface polynomial representation.
\item Compute the coordinate of the LGL points at the element interface
by evaluating the ``perturbed'' $p_s$th-order polynomial at the tensor-product LGL points used 
    to define the cell solution polynomial of order $p_s$ or $p_s+1$.
\end{itemize}
The element interfaces are perturbed as described above to test the conservation of entropy and therefore the 
freestream condition.\footnote{In a general setting, element interfaces can also 
be boundary element interfaces.}

\onecolumn
\begin{table}[htbp!]
\vspace{-0.75cm}
\begin{center}
\scriptsize
\begin{tabular}{||l||c|c|c|c|c|c||c|c|c|c|c|c||}
\hline \hline
        & \multicolumn{6}{c||}{Conforming, $p=1$} & \multicolumn{6}{c||}{Invisible mortar, $p=1$ and $p=2$}\\ \hline
 Grid & $L^1$    & Rate  & $L^2$    & Rate  & $L^{\infty}$ & Rate  & $L^1$    & Rate  & $L^2$    & Rate  & $L^{\infty}$ & Rate  \\ \hline
 4    & 5.43E-02 & -     & 6.54E-02 & -     & 1.40E-01   & -       & 4.48E-02 & -     & 5.99E-02 & -     & 2.16E-01     & -     \\ \hline
 8    & 2.04E-02 & -1.41 & 2.92E-02 & -1.16 & 8.42E-02   & -0.73   & 1.39E-02 & -1.69 & 2.21E-02 & -1.44 & 7.01E-02     & -1.62 \\ \hline
 16   & 5.56E-03 & -1.87 & 8.45E-03 & -1.79 & 2.85E-02   & -1.57   & 3.39E-03 & -2.04 & 5.94E-03 & -1.89 & 1.95E-02     & -1.85 \\ \hline
 32   & 1.44E-03 & -1.94 & 2.23E-03 & -1.92 & 8.12E-03   & -1.81   & 8.74E-04 & -1.96 & 1.50E-03 & -1.99 & 5.24E-03     & -1.89 \\ \hline
 64   & 3.68E-04 & -1.97 & 5.66E-04 & -1.98 & 2.26E-03   & -1.84   & 2.12E-04 & -2.04 & 3.70E-04 & -2.02 & 1.34E-03     & -1.97 \\ \hline
 128  & 9.28E-05 & -1.99 & 1.43E-04 & -1.99 & 6.05E-04   & -1.90   & 5.21E-05 & -2.03 & 9.21E-05 & -2.01  & 3.74E-04     & -1.84 \\ \hline
\end{tabular}
\caption{Convergence study of the viscous shock propagation: $p=1$ with $p=2$; density error.}.
\label{tab:iv_p1p2}
\end{center}
\end{table}

\begin{table}[htbp!]
\vspace{-0.75cm}
\begin{center}
\scriptsize
\begin{tabular}{||l||c|c|c|c|c|c||c|c|c|c|c|c||}
\hline \hline
      & \multicolumn{6}{c||}{Conforming, $p=2$} & \multicolumn{6}{c||}{Invisible mortar, $p=2$ and $p=3$}\\ \hline
 Grid & $L^1$    & Rate  & $L^2$    & Rate  & $L^{\infty}$ & Rate  & $L^1$    & Rate  & $L^2$    & Rate  & $L^{\infty}$ & Rate  \\ \hline
 4    & 1.78E-02 & -     & 2.68E-02 & -     & 1.36E-01     & -     & 1.26E-02 & -     & 2.28E-02 & -     & 1.41E-01     & -     \\ \hline
 8    & 2.93E-03 & -2.60 & 5.05E-03 & -2.41 & 5.98E-02     & -1.19 & 2.00E-03 & -2.65 & 4.08E-03 & -2.48 & 4.88E-02     & -1.53 \\ \hline
 16   & 3.86E-04 & -2.92 & 6.93E-04 & -2.87 & 1.09E-02     & -2.45 & 2.85E-04 & -2.81 & 5.90E-04 & -2.79 & 9.85E-03     & -2.31 \\ \hline
 32   & 5.55E-05 & -2.80 & 1.03E-04 & -2.74 & 2.23E-03     & -2.29 & 4.40E-05 & -2.70 & 9.28E-05 & -2.67 & 1.66E-03     & -2.57 \\ \hline
 64   & 8.96E-06 & -2.63 & 1.79E-05 & -2.53 & 4.96E-04     & -2.17 & 7.47E-06 & -2.56 & 1.57E-05 & -2.56 & 4.30E-04     & -1.95 \\ \hline
 128  & 1.46E-06 & -2.66 & 2.99E-06 & -2.58 & 8.96E-05     & -2.47 & 1.20E-06 & -2.64 & 2.54E-06 & -2.63 & 8.20E-05     & -2.39  \\ \hline
\end{tabular}
  \caption{Convergence study of the viscous shock propagation: $p=2$ with $p=3$; density error.}.
\label{tab:iv_p2p3}
\end{center}
\end{table}

\begin{table}[htbp!]
\vspace{-0.75cm}
\begin{center}
\scriptsize
\begin{tabular}{||l||c|c|c|c|c|c||c|c|c|c|c|c||}
\hline \hline
      & \multicolumn{6}{c||}{Conforming, $p=3$} & \multicolumn{6}{c||}{Invisible mortar, $p=3$ and $p=4$}\\ \hline
 Grid & $L^1$    & Rate  & $L^2$    & Rate  & $L^{\infty}$ & Rate  & $L^1$    & Rate  & $L^2$    & Rate  & $L^{\infty}$ & Rate  \\ \hline
 4    & 4.45E-03 & -     & 7.52E-03 & -     & 7.51E-02     & -     & 3.19E-03 & -     & 6.22E-03 & -     & 6.34E-02     & -     \\ \hline
 8    & 3.40E-04 & -3.71 & 6.50E-04 & -3.53 & 1.19E-02     & -2.66 & 2.57E-04 & -3.63 & 5.41E-04 & -3.52 & 1.12E-02     & -2.50 \\ \hline
 16   & 2.67E-05 & -3.67 & 5.36E-05 & -3.60 & 1.20E-03     & -3.30 & 2.09E-05 & -3.62 & 4.61E-05 & -3.55 & 1.05E-03     & -3.41 \\ \hline
 32   & 1.95E-06 & -3.77 & 4.25E-06 & -3.66 & 1.25E-04     & -3.26 & 1.64E-06 & -3.67 & 3.81E-06 & -3.60 & 9.23E-05     & -3.51 \\ \hline
 64   & 1.48E-07 & -3.72 & 3.67E-07 & -3.53 & 1.12E-05     & -3.48 & 1.20E-07 & -3.77 & 2.91E-07 & -3.71 & 1.00E-05     & -3.21  \\ \hline
\end{tabular}
  \caption{Convergence study of the viscous shock propagation: $p=3$ with $p=4$; density error.}.
\label{tab:iv_p3p4}
\end{center}
\end{table}

\begin{table}[htbp!]
\vspace{-0.75cm}
\begin{center}
\scriptsize
\begin{tabular}{||l||c|c|c|c|c|c||c|c|c|c|c|c||}
\hline \hline
      & \multicolumn{6}{c||}{Conforming, $p=4$} & \multicolumn{6}{c||}{Invisible mortar, $p=4$ and $p=5$}\\ \hline
 Grid & $L^1$    & Rate  & $L^2$    & Rate  & $L^{\infty}$ & Rate  & $L^1$     & Rate  & $L^2$    & Rate  & $L^{\infty}$ & Rate   \\ \hline
 4    & 1.21E-03 & -     & 2.28E-03 & -     & 2.50E-02     & -     & 1.11E-03  & -     & 2.18E-03 & -     & 2.99E-02     & -      \\ \hline
 8    & 8.50E-05 & -3.83 & 1.54E-04 & -3.88 & 3.04E-03     & -3.04 & 6.17E-05  & -4.16 & 1.21E-04 & -4.16 & 2.25E-03     & -3.73  \\ \hline
 16   & 2.75E-05 & -4.95 & 5.66E-06 & -4.77 & 1.52E-04     & -4.32 & 2.14E-06  & -4.85 & 4.88E-06 & -4.64 & 1.51E-04     & -3.90  \\ \hline
 32   & 1.16E-07 & -4.57 & 2.54E-07 & -4.48 & 7.54E-06     & -4.33 & 8.93E-08  & -4.58 & 2.18E-07 & -4.48 & 6.89E-06     & -4.45  \\ \hline
 64   & 5.21E-09 & -4.48 & 1.11E-08 & -4.52 & 4.01E-07     & -4.23 & 3.53E-09  & -4.66 & 9.00E-09 & -4.60 & 2.79E-07     & -4.63  \\ \hline
\end{tabular}
  \caption{Convergence study of the viscous shock propagation: $p=4$ with $p=5$; density error.}.
\label{tab:iv_p4p5}
\end{center}
\end{table}

\begin{table}[htbp!]
\vspace{-0.75cm}
\begin{center}
\scriptsize
\begin{tabular}{||l||c|c|c|c|c|c||c|c|c|c|c|c||}
\hline \hline
      & \multicolumn{6}{c||}{Conforming, $p=4$} & \multicolumn{6}{c||}{Invisible mortar, $p=4$ and $p=5$}\\ \hline
 Grid & $L^1$    & Rate  & $L^2$    & Rate  & $L^{\infty}$ & Rate  & $L^1$     & Rate  & $L^2$    & Rate  & $L^{\infty}$ & Rate  \\ \hline
 4    & 5.14E-04 & -     & 8.81E-04 & -     & 1.14E-02     & -     & 3.87E-04  & -     & 7.30E-04 & -     & 1.07E-02     & -     \\ \hline
 8    & 1.07E-05 & -5.59 & 2.21E-05 & -5.32 & 4.95E-04     & -4.52 & 8.12E-06  & -5.57 & 1.83E-05 & -5.32 & 4.57E-04     & -4.55 \\ \hline
 16   & 1.91E-07 & -5.81 & 4.10E-07 & -5.75 & 1.11E-05     & -5.47 & 1.44E-07  & -5.82 & 3.39E-07 & -5.76 & 1.08E-05     & -5.41 \\ \hline
 32   & 3.59E-09 & -5.73 & 8.71E-09 & -5.55 & 2.88E-07     & -5.27 & 2.85E-09  & -5.66 & 7.25E-09 & -5.55 & 2.32E-07     & -5.54 \\ \hline
\end{tabular}
  \caption{Convergence study of the viscous shock propagation: $p=5$ with $p=6$; density error.}.
\label{tab:iv_p5p6}
\end{center}
\end{table}

\begin{table}[htbp!]
\vspace{-0.75cm}
\begin{center}
\scriptsize
\begin{tabular}{||l||c|c|c|c|c|c||c|c|c|c|c|c||}
\hline \hline
      & \multicolumn{6}{c||}{Conforming, $p=4$} & \multicolumn{6}{c||}{Invisible mortar, $p=4$ and $p=5$}\\ \hline
 Grid & $L^1$    & Rate  & $L^2$    & Rate  & $L^{\infty}$ & Rate  & $L^1$    & Rate  & $L^2$    & Rate  & $L^{\infty}$ & Rate  \\ \hline
 4    & 1.17E-04 & -     & 2.31E-04 & -     & 3.18E-03     & -     & 9.95E-05 & -     & 2.09E-04 & -     & 3.35E-03     & -     \\ \hline
 8    & 1.66E-06 & -6.13 & 3.18E-06 & -6.18 & 9.00E-05     & -5.15 & 1.27E-06 & -6.29 & 2.61E-06 & -6.32 & 4.98E-05     & -6.07 \\ \hline
 16   & 1.50E-08 & -6.80 & 3.43E-08 & -6.54 & 9.62E-07     & -6.55 & 1.17E-08 & -6.77 & 2.88E-08 & -6.50 & 9.54E-07     & -5.71 \\ \hline
 32   & 1.51E-10 & -6.63 & 3.72E-10 & -6.53 & 1.31E-08     & -6.20 & 1.10E-10 & -6.73 & 2.98E-10 & -6.60 & 1.64E-08     & -5.86 \\ \hline
\end{tabular}
  \caption{Convergence study of the viscous shock propagation: $p=6$ with $p=7$; density error.}.
\label{tab:iv_p6p7}
\end{center}
\end{table}
\twocolumn
For all the degree tested (i.e. $p=1$ to $p=7$), the order of convergence of the conforming and nonconforming algorithms is very close to each other. However, note for both the $L^{1}$ and 
$L^{2}$ norms the nonconforming algorithm is more accurate than the conforming one. 
In the discrete $L^{\infty}$ norm instead, the nonconforming scheme 
is sometimes slightly worse than the conforming scheme; this results from the interpolation matrices being sub-optimal at nonconforming interfaces.

\subsection{Taylor-Green vortex at $Re=1,600$}
The purpose of this section is to demonstrate that the nonconforming algorithm has the same stability properties as the 
conforming algorithm. To do so, the Taylor--Green vortex problem is solved, which is a flow that degenerates to 
turbulence over time; therefore its solution is representative of the behavior of the algorithm for the solution 
of under-resolved turbulent flows. 

The Taylor--Green vortex case is solved on a periodic cube $[-\pi L\leq x,y,z\leq \pi L]$, where the initial condition 
is given by
\begin{equation}\label{eq:TG}
\begin{split}
&\fnc{U}_{1} = \fnc{V}_{0}\sin\left(\frac{x_{1}}{L}\right)\cos\left(\frac{x_{2}}{L}\right)\cos\left(\frac{x_{3}}{L}\right),\\
&\fnc{U}_{2} = -\fnc{V}_{0}\cos\left(\frac{x_{1}}{L}\right)\sin\left(\frac{x_{2}}{L}\right)\cos\left(\frac{x_{3}}{L}\right),\\
&\fnc{U}_{3} = 0,\\
&\fnc{P} = \fnc{P}_{0}+\frac{\rho_{0}\fnc{V}_{0}^{2}}{16}\left[\cos\left(\frac{2x_{1}}{L}+\cos\left(\frac{2x_{2}}{L}\right)\right)\right]
\left[\cos\left(\frac{2x_{3}}{L}+2\right)\right].
\end{split}
\end{equation}
The flow is initialized to be isothermal, \ie, $\fnc{P}/\rho=\fnc{P}_{0}/\rho_{0}=R\fnc{T}_{0}$, and $\fnc{P}_{0}=1$, $\fnc{T}_{0}=1$, $L=1$, and $\fnc{V}_{0} = 1$. 
Finally, the Reynolds number is defined by $Re=(\rho_{0}\fnc{V}_{0})/\mu$, where $\mu$ is the dynamic viscosity.  

The solver that is used is implemented in a compressible fluid dynamics code. Therefore, to obtain results that are reasonably close to those found for the incompressible Navier--Stokes
equations, a Mach number of $M = 0.05$ is used. The Reynolds number is set to $Re=1,600$, and $\rho_{0}=\gamma M^{2}$, where $\gamma=1.4$. The Prandtl number is set to $Pr=0.71$. A perturbed grid with eight hexahedrons
elements in each coordinate direction is used. This grid is constructed by perturbing the element 
interfaces as previously described, with the exception that
the interfaces are approximated using the minimum solution 
polynomial degree set in the simulation.
All the computations are performed without additional stabilization mechanisms (dissipation model, filtering, \etc), where the only numerical dissipation originates from the upwind inter-element coupling procedure.

Figure~\ref{fig:dkedt_tg} shows the time rate of change of the kinetic energy, $dke/dt$, for
the non conforming algorithm using a random distribution of solution polynomial
order between i) $p=2$ and $p=12$, ii) $p=7$ and $p=13$, and iii) $p=13$ and $p=15$. The 
incompressible DNS solution reported in \cite{de_wiart_tgv} is plotted as a reference. 
All simulations are stable on a variety of meshes with poor quality, which provides
numerical evidence that the nonconforming 
scheme inherits the same stability characteristics of the conforming 
algorithm \cite{Carpenter2014,Parsani2015,Carpenter2016b,Carpenter2016}. 

\begin{figure}[!t]
   \centering
   \includegraphics[width=0.5\textwidth]{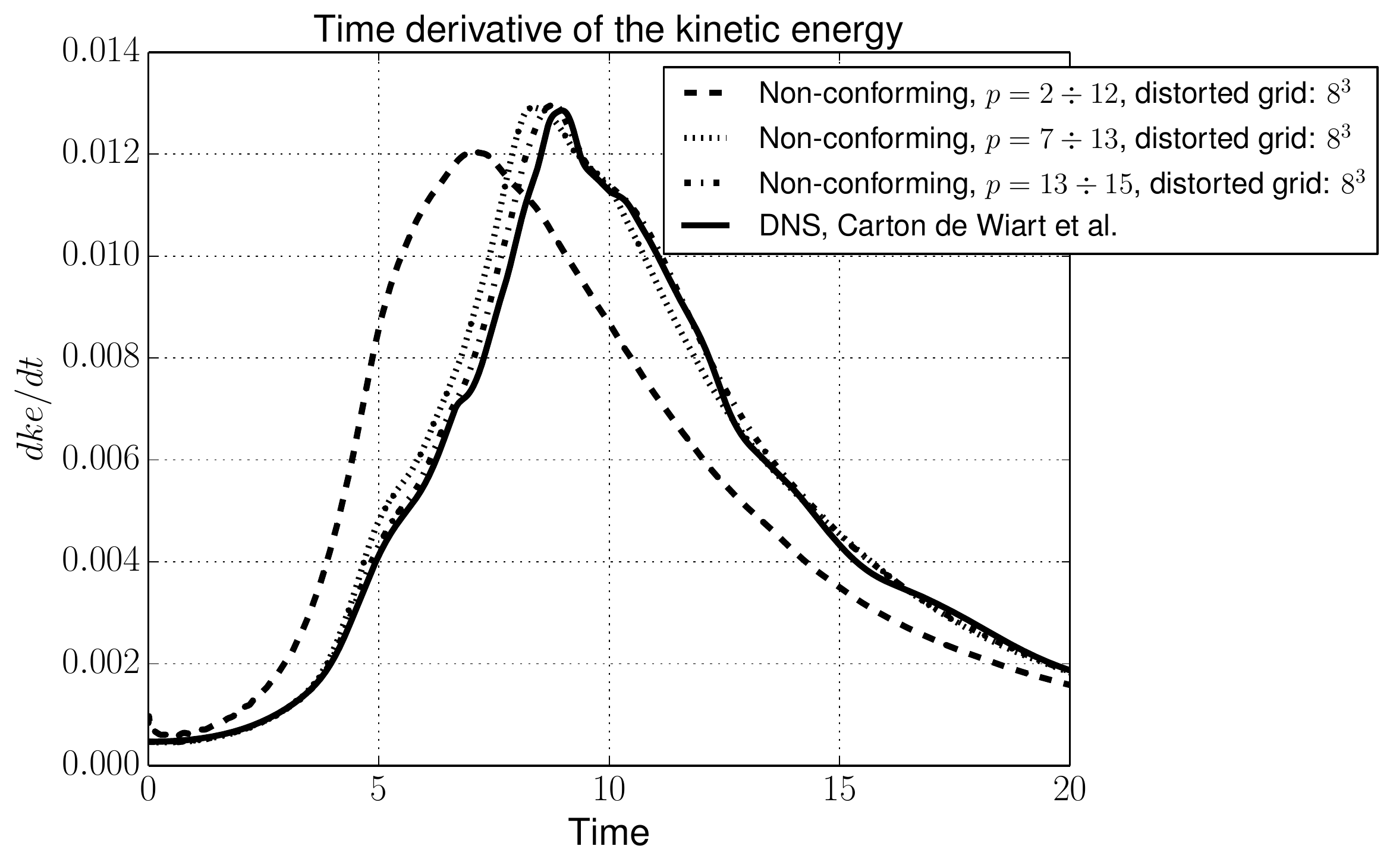}
   \caption{Evolution of the time derivative of the kinetic energy 
   for the Taylor--Green vortex at $Re = 1,600$, $M = 0.05$.}
   \label{fig:dkedt_tg}
\end{figure}

\subsection{Flow around a sphere at $Re=2,000$}
In this section, we test our implementation within a more complicated setting represented by the flow around a sphere at $Re=2,000$ and $M=0.05$. With this value of the Reynolds number the flow is fully turbulent.
In this case, a sphere of diameter $d$ is centered at the origin of the axes, and a box is respectively extended $20d$ and $60d$ upstream
and downstream the direction of the flow; the box size is $30d$ in both the $x_2$ and $x_3$ directions. As boundary conditions, 
we consider adiabatic solid walls at the surface of the sphere \cite{dalcin_2019_wall_bc} and far field on all faces of the box.
The surface of the sphere is first triangulated using third order simplices, and a boundary
layer composed of triangular prisms is extruded from the sphere surface for a total length of $3d$. The rest of the domain is meshed with an unstructured tetrahedral mesh. We then obtain an unstructured conforming hexahedral mesh by uniformly splitting each tetrahedron in four hexahedra, and each prism in three hexahedra, resulting in a total of 22,648 hexahedral elements. Figure 
\ref{fig:mesh-sphere} shows a zoom of the grid near the sphere. The colors indicate 
the solution polynomial degree used in each cell.
The quality of the elements is good in the boundary layer region whereas in the other portion of the domain is fairly poor. This
choice is intentional to demonstrate the performance of the algorithm on non-ideal grids.

\begin{figure}[!t]
   \centering
   \includegraphics[width=0.5\textwidth]{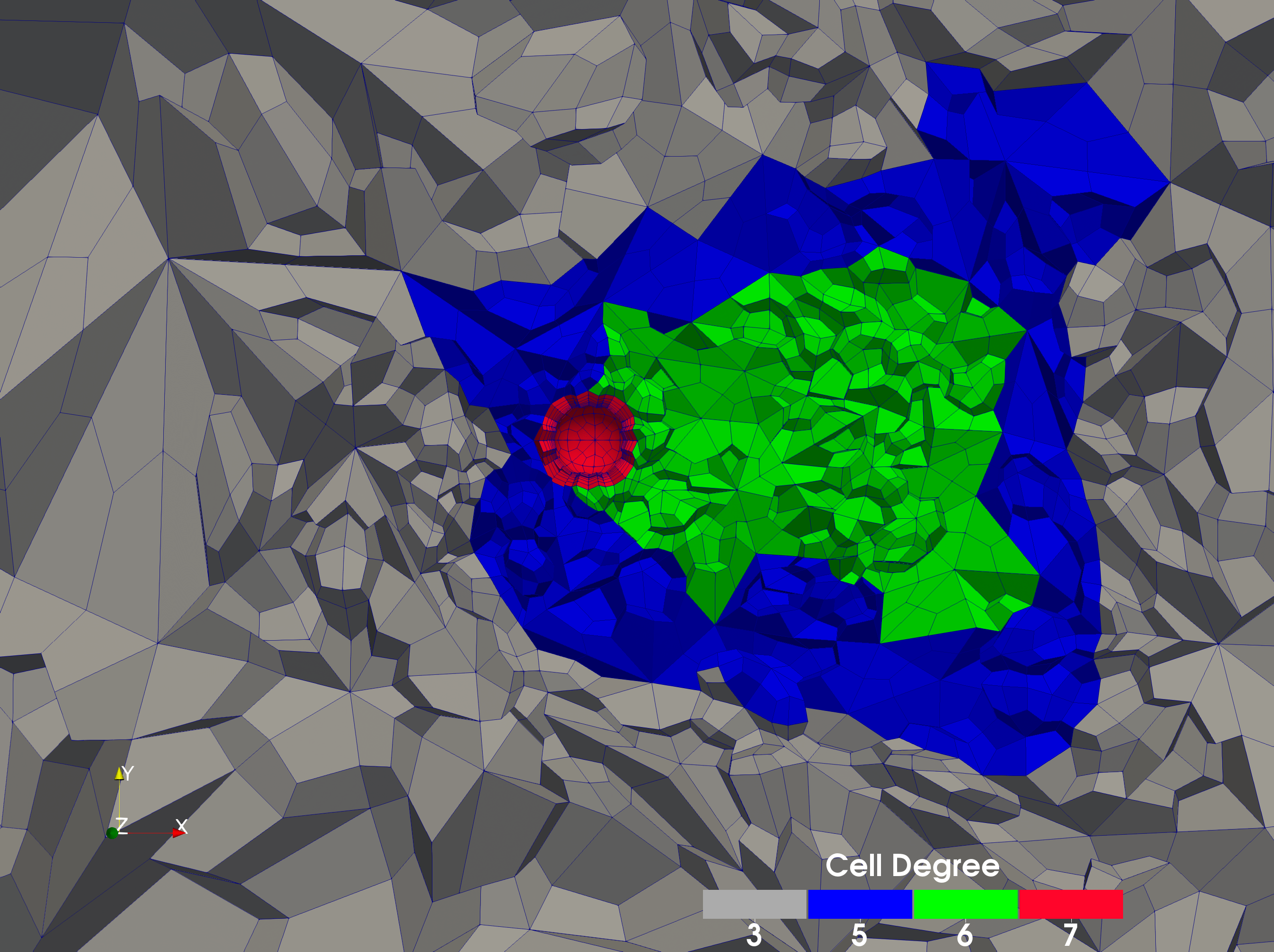}
   \caption{Polynomial order distribution for the mesh around a sphere at $Re = 2,000$, $M = 0.05$.}
   \label{fig:mesh-sphere}
\end{figure}

We compute the time-average value of the drag coefficient, $\langle C_D \rangle$, 
and we compare it with the value reported in in literature \cite{Munson_1990}.
Figure \ref{fig:cd-sphere}, shows a time window of the evolution of the drag coefficient. 
To compute $\langle C_D \rangle$ we average the flow field and hence the aerodynamic forces for 
200 time units. From Table \ref{tab:sphere_cd}, it can be seen that the computed time-average drag 
coefficient matches very well the value reported in literature. 

\begin{figure}[!t]
   \centering
   \includegraphics[width=0.5\textwidth]{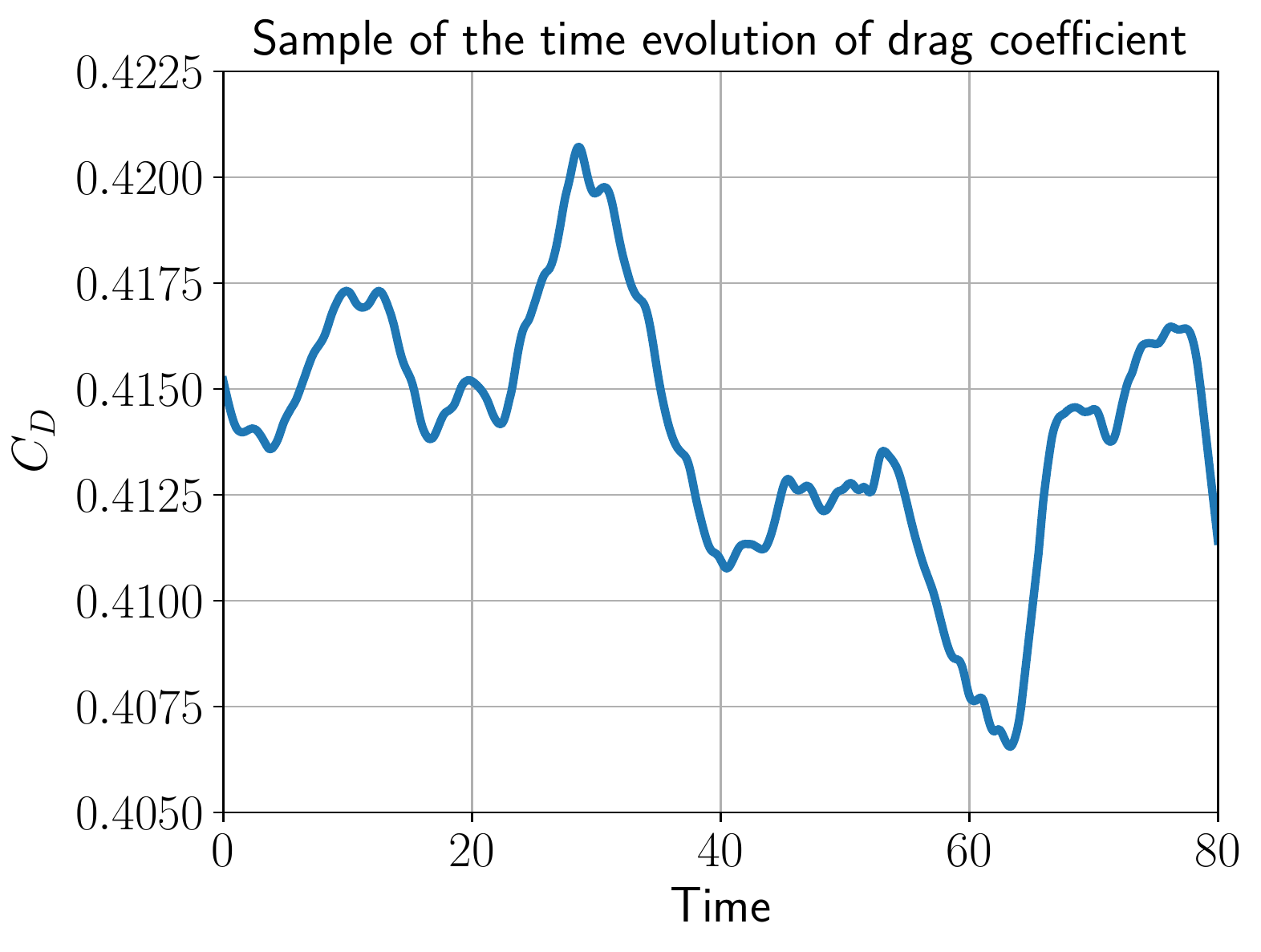}
   \caption{Sample of the evolution of the drag coefficient around a sphere at $Re = 2,000$, $M = 0.05$.}
   \label{fig:cd-sphere}
\end{figure}

\begin{table}[htbp!]
   \centering
\begin{tabular}{||c|c||}
\hline \hline
                                 & $\langle C_D \rangle$ \\ \hline 
Munson et al. \cite{Munson_1990} & 0.412 \\ \hline
Present                          & 0.414 \\ \hline
\end{tabular}
\caption{Time-average drag coefficient of a sphere at $Re = 2,000$, $M = 0.05$.}.
\label{tab:sphere_cd}
\end{table}

\section{Conclusions}\label{sec:conclusions}
In this paper, the entropy conservative $p-$refinement/coarsening nonconforming algorithm in~\cite{Fernandez2019_p_euler,Fernandez2018_TM} is extended 
to the compressible Navier--Stokes equations. The viscous terms are transformed into a quadratic form, 
in terms of the entropy variables, so that entropy conservation/stability of the original compressible Euler code is maintained. 
An LDG-IP type approach is used in discretizing the viscous terms and entropy stability of the scheme is proven. The accuracy and stability characteristics of the resulting numerical 
schemes are shown to be comparable to those of the original conforming scheme, in the context of the viscous shock problem, the 
Taylor--Green Vortex problem, and a turbulent flow past a sphere.

\section*{Acknowledgments}
Special thanks are extended to Dr. Mujeeb R. Malik for partially funding this work as part of
NASA's ``Transformational Tools and Technologies'' ($T^3$) project.  The research reported in this publication was
also supported by funds from King Abdullah University of Science and Technology (KAUST).
We are thankful for the computing resources of the Supercomputing Laboratory and the Extreme Computing Research Center at KAUST. 
Gregor Gassner and Lucas Friedrich has been supported by the European Research Council (ERC) under the European Union’s Eights Framework Program Horizon 2020 with the research project Extreme, ERC grant agreement no. 714487.








\bibliographystyle{aiaa}
\bibliography{Bib}
\end{document}